\theoremstyle{plain}
\newtheorem{theorem}{Theorem}[section]
\newtheorem{lemma}[theorem]{Lemma}
\newtheorem{corollary}[theorem]{Corollary}
\newtheorem{definition}[theorem]{Definition}
\theoremstyle{remark}
\newtheorem{remark}{Remark}[section]
\newtheorem{example}{Example}[section]
\newtheorem*{notation}{Notation}
\newtheorem*{acknowledgment}{Acknowledgment}
\numberwithin{equation}{section}
\newcommand{\bA}{\mathbb{A}}
\newcommand{\mbA}{{\mathring \bA}}
\newcommand{\mbB}{{\mathring \bB}}
\newcommand{\bB}{\mathbb{B}}
\newcommand{\bD}{\mathbb{D}}
\newcommand{\B}{\mathbb{B}}
\newcommand{\K}{\mathbb{K}}
\newcommand{\bK}{\mathbb{K}}
\newcommand{\R}{\mathbb{R}}
\newcommand{\Z}{\mathbb{Z}}
\newcommand{\PP}{\mathbb{P}}
\newcommand{\bI}{\mathbb{I}}
\newcommand{\g}{\mathfrak{g}}
\newcommand{\XX}{\mathfrak{X}}
\newcommand{\Gl}{\mathrm{Gl}}
\newcommand{\GL}{\mathrm{GL}}
\newcommand{\SL}{\mathrm{SL}}
\newcommand{\OO}{\mathrm{O}}
\newcommand{\Hom}{\mathrm{Hom}}
\newcommand{\Aut}{\mathrm{Aut}}
\newcommand{\End}{\mathrm{End}}
\newcommand{\Gras}{\mathrm{Gras}}
\newcommand{\ad}{\mathrm{ad}}
\newcommand{\id}{\mathrm{id}}
\newcommand{\Ad}{\mathrm{Ad}}
\newcommand{\set}{\mathrm{set}}
\newcommand{\Alg}{\mathrm{Alg}}
\newcommand{\Walg}{\mathrm{Walg}}
\newcommand{\Weil}{\mathrm{Wsp}}
\newcommand{\Wman}{\mathrm{Wman}}
\newcommand{\Balg}{\mathrm{Balg}}
\newcommand{\Mor}{\mathrm{Mor}}
\newcommand{\Der}{{\mathrm{Der}}}
\newcommand{\Infaut}{{\mathrm{Infaut}}}
\newcommand{\eps}{\varepsilon}
\newcommand{\inv}{^{-1}}
\newcommand{\ssk}{\smallskip}
\newcommand{\nin}{\noindent}
\newcommand{\ul}{\underline}
\begin{document}

\title[Weil Spaces]{Weil Spaces and Weil-Lie groups}

\author{Wolfgang Bertram}

\address{Institut \'{E}lie Cartan Nancy \\
Lorraine Universit\'{e}, CNRS, INRIA \\
Boulevard des Aiguillettes, B.P. 239 \\
F-54506 Vand\oe{}uvre-l\`{e}s-Nancy, France}

\email{\url{wolfgang.bertram@univ-lorraine.fr}}

\subjclass[2010]{14L10       
14A20  	
16L99  	
18F15  	
22E65  	
51K10  	
58A03  	
}

\keywords{synthetic differential geometry,
topos, functor category,
Weil algebra, Weil manifold, Weil space, Weil variety, Weil-Lie group
}

\begin{abstract} 
We define {\em Weil spaces, Weil manifolds, Weil varieties} and
{\em Weil Lie groups} over an arbitrary commutative base ring $\K$
(in particular, over discrete rings such as $\K = \Z$), and we develop the basic theory of such spaces,
leading up the definition of a Lie algebra attached to a Weil Lie group.
By definition, the category of Weil spaces is the category of functors from $\K$-Weil algebras to sets;
thus our notion of Weil space is similar to, but weaker than the one of  {\em Weil topos} defined by E.\ Dubuc
(\cite{Du79}). In view of recent result on Weil functors for manifolds over general topological 
base fields or rings (\cite{BeS}), this generality is the suitable context to formulate and prove
general results of infinitesimal differential geometry, as started by the approach developed in \cite{Be08}.
\end{abstract}

\maketitle

\section{Introduction}

The present work is a contribution to general {\em infinitesimal Lie theory}, and to the general 
theory of {\em infinitesimal spaces}.  In preceding work \cite{Be08}, based on \cite{BGN04},
we have been able to describe basic features of Lie theory for {\em Lie groups and manifolds 
over general topological base fields or rings}.  This approach is very satisfying in many 
respects, but still has the drawback that it relies on topology, hence does not apply to spaces defined
over discrete rings such as $\Z$; related to this, it does not lead to cartesian closed categories, nor
does it apply to singular spaces.  In the present work, we 
 introduce the category
of {\em Weil spaces and Weil laws}, and the one of {\em Weil Lie groups}, 
which do not have these drawbacks. 
 
 \ssk
The basic ideas how to achieve this goal can be traced back to Weil's paper \cite{We53}, which influenced both the development
of  algebraic geometry and of  {\em synthetic differential geometry}.  
As Weil puts it (loc.\ cit.), 
``Cette th\'eorie ... a pour but de fournir, pour le calcul diff\'erentiel d'ordre infinit\'esimal quelquonque sur une
vari\'et\'e, des moyens de calcul et des notations intrins\`eques qui soient aussi bien adapt\'es \`a leur object,
et si possible, plus commodes que ceux du calcul tensoriel classique pour le premier ordre.''\footnote{``The aim of the theory is to furnish, for infinitesimal differential calculus of arbitrary order on a manifold,
the tools of calculus, and intrinsic notions that are as suitable and, possibly, more flexible than usual first order tensor calculus.''} 
 We show that this aim can be reached in a  simple and general way in the framework of the {\em category
 of functors from $\K$-Weil algebras to sets}. The approach may 
appear primitive,  but
it seems to us that it is supported by recent developments, such as general infinite dimensional Lie theory.
I will, first of all, describe the approach, and then discsuss  the relation with
other  work.

\subsection{From polynomial laws to Weil laws}

Already undergraduate students are taught to distinguish between a
{\em polynomial} $P \in \K[X]$ and the {\em polynomial map} $\K\to \K$
 induced by $P$. For several, possibly infinitely many variables, 
N.\ Roby  (\cite{Ro63}) explains that {\em polynomial laws} play the same
r\^ole with respect to
 {\em polynomial maps} between
general modules over a ring $\K$ 
  (for a summary, see the appendix of \cite{Lo75}): a ``polynomial'' $\ul P$
  between $\K$-modules $V$ and $W$ is something that can be ``extended'' to a map $P^\bA$
  between $V^\bA = V\otimes_\K \bA$ and $W^\bA$, for any ring extension $\bA$ of the
  base ring $\K$. 
It is common for such concepts that the ``underlying'' polynomial map $P = P^\K$
need not determine the abstract object ``polynomial'', but in certain contexts (e.g., infinite fields) it does.

\ssk
For smooth maps $f$, at a first glance, there is no such thing as a ``scalar extension'' $f^\bA$.
However, a smooth map $f$ always admits an extension by its {\em tangent map} $Tf$, and we have shown in \cite{Be08} that
$Tf$ can rightly by interpreted as a {\em scalar extension of $f$ by the ring $T\K=\K \oplus \eps \K$
($\eps^2=0$) of dual numbers}. 
This generalizes  for all ring extensions of $\K$ by {\em algebras of infinitesimals}, nowadays called
{\em Weil algebras}:
these are commutative unital $\K$-algebras of the form
\[
\bA = \K 1 \oplus \mbA,
\]
where $\mbA$ is a {\em nilpotent ideal}, moreover free and finite-dimensional over $\K$. 
For usual, real manifolds, the theory of {\em Weil functors} (see \cite{KMS}) shows that a smooth map
$f:M \to N$ admits an extension to a map $f^\bA:M^\bA \to N^\bA$, for every Weil algebra
$\bA$. This extension   behaves like a {\em tangent map of kind $\bA$};
therefore the notation
$T^\bA f: T^\bA M \to T^\bA N$ is  often used in the literature.
Indeed, when $\bA = T\K$ (dual numbers), then
$TM:=T^{T\K} M$ is precisely  the ``usual'' (first) tangent bundle, when $\bA = TT\K$, then
$T^\bA M = TTM$, is the second order tangent bundle, and so on --
this has  been generalized, and at the same time conceptually explained, in \cite{Be08, So12, BeS}:
most importantly, the map $f^\bA := T^\bA f$ is indeed {\em smooth over $\bA$}, which fully justifies
to consider it as an  ``$\bA$-scalar extension of $f$''.
Motivated by this, we define {\em Weil spaces and Weil laws} following the same pattern 
as in  Roby's definition of polynomial laws (section \ref{sec:Weilsp}):
a {\em Weil space} is a functor $\ul M$ from the category $\ul{\Walg}_\K$ of $\K$-Weil algebras to the category of
sets;  it  assigns to every $\K$-Weil algebra $\bA$
a set $M^\bA$ that plays the r\^ole of an $\bA$-tangent bundle over the base
$M = M^\K$.  {\em Weil laws} $\ul f$ are the natural morphisms, i.e., {\em natural transformations},
between two such functors: for each Weil algebra $\bA$, there is an ``$\bA$-tangent map''
$f^\bA:M^\bA \to N^\bA$, depending functorially both on $f$ and on $\bA$. 
As explained above, usual smooth manifolds and maps furnish
an example of this pattern.

\subsection{Weil manifolds}
Every $\K$-module $V$ defines a ``flat'' Weil space, that can be used as model space
to define manifolds in the usual way via ``gluing data'': for each Weil algebra $\bA$,
$V^\bA$ can be described as
$$
V^\bA = V \otimes_\K  \bA = V \otimes_\K (\K \oplus \mbA) = V \oplus (V \otimes_\K \mbA)
= V \times  V_\mbA .
$$
In the same way  we can
define $U^\bA := U \times (V\otimes_\K \mbA)$ for any non-empty subset $U$ of $V$; this presentation
makes explicit the fibered structure of $U^\bA$ over $U = U^\K$.
Since the purely set-theoretic ``gluing data'' of a manifold (by forgetting about topology)
can be formulated in terms of subsets of $V$ and bijections between such subsets, 
a {\em Weil manifold} can be defined to be a functor from $\ul{\Walg}_\K$ into the
category of ``set theoretic manifolds'' (section \ref{sec:Weilmf}).

\ssk
The absence of a ``usual'' topology gives rise to some uncommon features; but this is remedied 
by always speaking of manifolds {\em with atlas}: the atlas is part of the 
structure of a manifold, and since it is given by a covering, it defines a topology which we call the
{\em atlas-topology}.
Structures such as dimension and the Lie algebra depend on this atlas --
just as  for usual Lie groups they depend on topology:  e.g., 
a  usual Lie group $G$  with discrete topology gives rise to a zero-dimensional Lie
group $G^{discr}$, and  $\R^n$ gives rise to $n+1$ different Lie groups
$\R^k \times (\R^{n-k})^{discr}$. 

\subsection{Weil varieties}
An important feature of Weil manifolds, compared to general Weil spaces, is that they
are ``infinitesimally linear'' (in synthetic differential geometry one uses the term
{\em microlinear}): 
the tangent spaces (fibers of $TM$ over $M$) are {\em linear spaces}, as they should,
and hence the fibers of the double tangent bundle $TTM$ are {\em bilinear spaces}
(see \cite{Be08}), and so on: the geometry of higher order tangent objects is polynomial. 
Since  Weil manifolds are not the only Weil spaces having such properties (e.g.,
affine algebraic varieties share them),  it is useful to define  the category
of {\em Weil varieties} as the infinitesimally linear Weil spaces (section \ref{sec:Weilvar}).
In order to understand Lie brackets and other bilinear objects, we prove some basic 
facts on {\em second order tangent bundles}.

\subsection{Weil Lie groups}
Clearly, usual Lie groups are generalized by group objects in the three categories
discussed so far (Weil spaces, Weil varieties, Weil manifolds).
It turns out that, for developing basic Lie theory, the category of Weil varieties is best
adapted, hence we define a {\em Weil Lie group} to be a group object in this category. 
This is the good setting to  define a bilinear {\em Lie bracket}
and to generalize the theory of the {\em higher order tangent groups $T^k G$ and $J^k G$}
from \cite{Be08} (section \ref{sec:WeilLie}). 
In principle, we follow here the presentation given by Demazure and Gabriel in 
\cite{DG}, II, \S 4, just by forgetting about the language of schemes and sheaves and instead insisting on
functorial properties of Weil algebras.
In the same way, one may recast the infinitesimal  theory of {\em symmetric spaces}, as developed in \cite{Be08}.

\subsection{Symmetric  spaces and differential geometry}
In order to keep this work concise, we do not develop general notions of differential geometry
in any detail;  we just give one major result (theorems \ref{th:Aut1}, \ref{th:Aut2}) illustrating
 that the conceptual framework of Weil varieties is
well-adapted to generalize, and to  simplify by the way, most  of the concepts and results
from  \cite{Be08}.
In particular, as explained in loc.\ cit.,  we consider the {\em theory of connections} to be
the core of infinitesimal geometry. The thesis \cite{So12} contains a good deal of the
theory to be developed in this context, and these topics certainly deserve further study.

\subsection{Discussion}
As said above, our notion of Weil space is, in some sense, quite ``primitive'': we have an
 imbedding of the category of usual manifolds into the category of Weil spaces, but this imbedding
is {\em not full}, that is, for two classical manifolds $M$ and $N$, not every morphism $\ul f$
between the corresponding 
Weil spaces $\ul M$ and $\ul N$ is induced by a usual   smooth map $f:M \to N$.
Indeed, our category of Weil spaces is essentially what E.\ Dubuc in \cite{Du79} calls (in the real
case) the {\em Weil topos}, and Dubuc explicitly describes examples of morphisms
in the Weil topos that are not induced by smooth maps (loc.\ cit., p.\  258/59).
The reason for this failure is  that morphisms in the Weil topos only have
``infinitesimal regularity'', but there is no reason why infinitesimal regularity should imply
local regularity, such as continuity or usual smoothness.  In order to get full imbeddings, Dubuc constructs in loc.\ cit.\ more
sophisticated topoi, which since then have been further 
refined and used  in synthetic differential geometry
(cf.\ \cite{Ko06, La87, MR91}).

\ssk
However, I believe that there are several good reasons to develop this ``primitive'' approach, in spite
of the apparent ``lack of fullness'': 

\ssk
(1) Fullness is not needed for defining and studying infinitesimal objects such as Lie brackets, connections,
 curvature tensors and the like. The methods are simple, they apply directly  to usual manifolds, even in ``very
 infinite dimensional settings'', and hence give conceptual and general proofs for results in this framework. 
 
\ssk
(2) As Ivan Kol\'a\v r stresses in several of his papers (see \cite{Kolar, Kolar2}), there  are two approaches to 
Weil functors which are sort of dual to each other, and which he calls {\em covariant}, resp.\  {\em contravariant}.
As far as I understand, the topoi mentioned above, in the spirit of algebraic geometry, follow the
contravariant approach: inevitably, at some place,
the relation of a vector space (or $\K$-module) $V$ with its dual space $V^*=\Hom_\K(V,\K)$
 comes into the game, be it in the language of function algebras, or sheaves, or schemes.  
This makes things technically more complicated, and, at some point, breaks down: it works well
in finite-dimensional, or other sufficiently regular situations, but becomes problematic already for real topological vector spaces 
 beyond Fr\'echet spaces. This is even more so for
general modules $V$ over rings (not assumed to be free), where $V^*$ may be
very poor:   if there are not enough linear functionals, then there won't be enough smooth scalar functions
or germs neither!
On the other hand,  the differential calculs developed in \cite{BGN04},
and the subsequent work \cite{Be08, Be13, BeS, So12}, are   ``purely covariant'':
at no point, the theory relies on function algebras or dual spaces (e.g., vector fields are not {\em defined}
as derivations, although they may of course {\em act} as such).
 This is the reason why this approach works so well in arbitrary dimension. 

\ssk
(3) Finally, I conjecture  that it is possible to obtain fullness also by a ``purely covariant approach'', which then
would combine advantages of the sophisticated topoi with those of a simple approach. 
Moreover, understanding how this works should also lead to  important insights into the structure
of differential calculus itself: 
the key observation (see \cite{Be08b, Be13})
 is that every usual smooth map admits even more general scalar extensions than those by
Weil algebras; thus there should be a class of bundle algebras 
(cf.\ Section \ref{sec:Walg}), strictly bigger than the one of Weil algebras, such that the functor category
from this category to \ul{set}  really corresponds to a ``smooth''  category.
We hope to be able to develop this approach in subsequent work.

\ssk
Related to the preceding item, another, very interesting, topic for further research is to extend the theory to
classes of {\em non-commutative} (bundle) algebras, and foremost, to {\em super-commutative
algebras} -- see recent work \cite{AHW, AlL, BCF}   (where {\em super Weil algebras} are introduced) 
making already important steps into  this direction.

\begin{acknowledgment}
I thank Arnaud Souvay for many stimulating discussions on topics related to this work.
\end{acknowledgment}

\begin{notation}
$\bK$ is  a unital commutative ring that may be considered to be fixed once and for all (think of $\K=\R$),
$\bA$, $\bB$,... are associative commutative and unital $\K$-algebras.
Categories and functors are usually underlined; 
the category of $\K$-algebras is denoted by $\ul{\Alg}_\K$,
and the category of sets and maps by \ul{set}; categories whose objects are functors
(functor categories) are doubly underlined, e.g., $\ul{\ul{\Weil}}_\K$.
\end{notation}

\section{Bundle algebras, Weil algebras, and vector algebras}
 \label{sec:Walg}

We define some categories of algebras that play an important r\^ole in differential geometry,
and we develop the basic theory as far as needed for our purposes.

\begin{definition}\label{def:Walg}

\begin{enumerate}
\item
A {\em scalar extension} of $\K$ is an associative commutative
unital $\K$-algebra $\bA$, or, in other words, a morphism
$\phi : \K \to \bA$ of unital commutative rings. Scalar extensions of $\K$
 form a category $\ul\Alg_\K$,
morphisms being $\K$-algebra homomorphisms.
\item
 A {\em $\K$-bundle algebra}  is an associative  unital $\K$-algebra of the form 
$$
\bA = \K \oplus \mbA
$$
where $\K =\K 1$ and $\mbA$ is an ideal of $\bA$, called the {\em fiber of $\bA$},
 which is assumed to be free and finite-dimensional as
a $\bK$-module. 
{\em Morphisms of bundle algebras} are  algebra homomorphisms preserving 
fibers. 
$\K$-bundle algebras form a category denoted by $\ul\Balg_\K$.
\item
 A {\em $\K$-Weil algebra}  is a commutative bundle algebra such that the fiber $\mbA$
  is a {\em nilpotent}
 ideal, 
 called the {\em ideal of infinitesimals}. $\K$-Weil algebras
 form a category denoted by \ul{$\Walg_\K$}, 
 morphisms being  bundle algebra morphisms. 
 \item
 The {\em height} of a Weil algebra $\bA$ is the smallest natural number $k$ such
 that $\mbA^{k+1} = 0$.  Weil algebras of height $\leq k$  form a category denoted by
 $\ul\Walg_{\K}^k$.
 \item
A Weil algebra $\bA$  is called a {\em vector algebra} if
it is of height one, i.e.,  if
the ideal of infinitesimals has zero product:
 $\forall a,b \in \mbA$: $ab = 0$.
 \end{enumerate} 
\nin
\end{definition}

\nin
Elements in a bundle algebra will be written
 $(x,a) \in \K \oplus \mbA$, so  the  product is 
\begin{equation}\label{eqn:B-notation}
(x,a) \cdot (x',a') = (xx', xa' + ax' + aa'),
\end{equation}
and in a vector algebra we have, moreover, $aa'=0$.

\begin{definition}
For any bundle algebra $\bA = \K \oplus \mbA$, the projection
$\pi:\bA \to \K$ is a morphism with kernel $\mbA$, called the {\em projection onto the base ring},
and the natural injection $\zeta : \K \to \bA$ is a morphism called the {\em zero section}.
Thus $\bA$ is a ring extension of $\K$, via the zero section, but $\K$ is also a
ring ``extension'' of $\bA$, via the projection. 
However, we will rather say that $\K$ is obtained from $\bA$ by {\em scalar restriction}. 
\end{definition}

\begin{example} 
Our main examples of bundle and Weil algebras are:

\begin{enumerate}
\item
The {\em tangent algebra of $\K$}, or {\em dual numbers over $\K$}, is the vector 
algebra
\begin{equation*}
T\K := \K [\eps] := \K[X] / (X^2) = \K \oplus \eps \K, \qquad \eps^2 = 0 .
\end{equation*}
\item
The {\em idempotent algebra of $\K$} is the bundle algebra
\begin{equation*}
I \K := \K [j] := \K[X] / (X^2-X) = \K \oplus j  \K, \qquad j^2 = j .
\end{equation*}
\item
The {\em $k$-jet algebra} is the Weil algebra of height $k$
\begin{equation*}
J^k \K := \K[X] / (X^{k+1}) = \K \oplus (\delta \K \oplus \ldots \oplus \delta^k \K),
\qquad \delta^{k+1}=0 .
\end{equation*}
\item
Let $I_0:=(X)$ be the ideal of polynomials in $\K[X]$ that vanish at $0$.
For $(s_1,\ldots,s_k) \in \K^k$ we define a bundle algebra
$$
\qquad 
\bA = \K[X]/ (X (X-s_1) \cdots (X - s_k))  = \K \oplus I_0 /   (X (X-s_1) \cdots (X - s_k)) .
$$
This  bundle algebra is used in {\em simplicial differential calculus}, see  \cite{Be13}. 
\item 
More generally, let $I_0 = (X_1,\ldots,X_m)$ be the ideal of polynomials 
in $m$ variables that vanish at the origin of $\K^m$, and
$J \subset I_0$ an ideal having a free  complement. Then
$\bA = \K[X_1,\ldots,X_m]/ J$ is a bundle algebra, and it is a Weil algebra if 
$I_0^r \subset J$ for some $r$, i.e., if it is a quotient of the 
{\em Weil algebra of $m$-dimensional $r$-velocities}
\[
{\mathbb D}_m^r := \K[X_1,\ldots,X_m]/ I_0^r .
\]
Every Weil algebra can be presented in this way;  such presentations play an important 
r\^ole in synthetic differential geometry.
\item
{\em Tensor products} and {\em fiber products} can be used to construct new bundle or Weil algebras
from given ones, see below. Of particular importance will be the {\em second tangent algebra}
\[
\qquad \quad
TT\K := T\K \otimes_\K T\K = (\K \oplus \eps_1 \K) \otimes (\K \oplus \eps_2 \K) =
\K \oplus (\eps_1 \K \oplus \eps_2 \K \oplus \eps_1 \eps_2 \K)
\]
with $\eps_1^2 = 0 = \eps_2^2$. It is isomorphic to $\K[X_1,X_2] / (X_1^2,X_2^2)$.
\end{enumerate} 
\end{example}

\begin{definition}
In the categories of bundle or Weil algebras, {\em pushouts} and {\em pullbacks} 
are given by the following constructions: 
given two algebras in these categories, $\bA = \K \oplus \mbA$ and $\bB = \K \oplus \mbB$, 
\begin{enumerate}
\item
the pushout (with respect to the zero sections) is the {\em tensor product over $\K$}, which 
can be identified with the $\K$-module
$$
\bA \otimes_\K \bB =  (\K \oplus \mbA) \otimes_\K (\K \oplus \mbB) =
\K \oplus (\mbA \oplus \mbB \oplus \mbA \otimes_\K \mbB),
$$
with product given by:

$ (x; a,b,u \otimes v) \cdot (x';a',b',u' \otimes v') = \bigl(xx';  xa'+x'a+aa', $
$$
 \qquad \qquad xb'+x'b+bb', xu' \otimes v' + x' u\otimes v + 
 uu'\otimes vv' + a \otimes b' + a' \otimes b \bigr),
$$
\item
the pullback (with respect to the projections)
is the {\em fibered product} or {\em Whitney sum over $\K$}, which can be identified 
with the $\K$-module
$$
\bA \times_\K \bB = \K \oplus (\mbA \oplus \mbB) 
$$
with product given by:
$$
(x; a,b) \cdot (x';a',b') = 
(xx'; xa'+x'a+aa',xb'+x'b+bb') .
$$
\end{enumerate}
\end{definition}
%
\begin{remark}
The height of $\bA \times_\K \bB$ is the maximum of the heights of $\bA$ and $\bB$,
whereas the height of $\bA \otimes_\K \bB$ is their sum.
In particular, the Whitney sum of two vector algebras is again a vector algebra, whereas
their tensor product is of height two (cf.\ the example of $TT\K$ given above). 
\end{remark}

Obviously, there is a natural sequence of morphisms of Weil (or bundle) algebras
\begin{equation}\label{eqn:WeilSeq}
\begin{matrix} 
\K & \to & (\mbA \otimes_\K \mbB ) \oplus \K  &
\to & \bA \otimes \bB & \buildrel{p_{\bA,\bB}} \over \to &
\bA \times_\K \bB & \to & \K
\end{matrix}
\end{equation}
which is {\em exact} in the following  sense: 

\begin{definition}\label{def:ideal}
An {\em ideal} of a bundle (resp.\  Weil) algebra $\bA  = \K \oplus \mbA$ is an ideal
$\bI$ of $\bA$ contained in $\mbA$ that is free as $\K$-module and admits some free 
$\K$-module complement $C$.
Note that $\bA /  \bI$ then is again a bundle (or Weil) algebra, and so is
$$
\hat \bI := \K \oplus \bI .
$$
We then  say that the following is a {\em short exact sequence of Weil or bundle algebras}:
$$
\begin{matrix} 0 & \to & \hat \bI & \to & \bA & \to & \bA / \bI & \to & 0 \end{matrix} .
$$
The sequence (\ref{eqn:WeilSeq}) is exact, and the  {\em vertical bundle algebra of $\bA \otimes \bB$} is
its kernel: 
$$
\bA \odot_\K  \bB := \bA \odot \bB  := 
\widehat{\mbA \otimes_\K \mbB} =   (\mbA \otimes_\K \mbB ) \oplus \K ,
$$
Finally,
a  {\em vector ideal} is an ideal $\bI$ acting as zero on $\mbA$:
$\forall a \in \mbA$, $\forall i \in \bI$: $ai=0$.
\end{definition}

The  preceding constructions lead to  natural morphisms between bundle or
Weil algebras, all of which will have important  global counterparts:

\begin{lemma}\label{la:sum}\label{affine-lemma}
Let $\bA$ be a commutative bundle algebra. 
\begin{enumerate}
\item
The product map in $\bA$ gives rise to a morphism of bundle algebras 
$$
\mu : \bA \otimes_\bK  \bA 
 \to \bA, \quad (x,a) \otimes (y,b)  \mapsto (x,a)(y,b)=(xy,xb+ay+ab)  \, .
$$
\item
The {\em flip} or {\em exchange map} is an automorphism of bundle algebras:
$$
\tau : \bA \otimes_\bK \bA \to \bA \otimes_\bK \bA, \quad a \otimes b \mapsto b \otimes a .
$$
\item
 If $\bA$ is a vector algebra, then addition in $\mbA$ gives rise to a morphism
$$
\alpha  : \bA \times_\bK \bA \to \bA, \quad (x,a,b) \mapsto (x,a+b) \, ,
$$
and each scalar $r \in \K$ gives rise to an algebra endomorphism
$$
\rho_r: \bA \to \bA, \quad (x,a) \mapsto (x,ra) .
$$
\item
Let $\bI$ be a vector ideal in the bundle algebra $\bA$. Then the map 
$$
\beta : \hat \bI \times_\K \bA = \K \oplus \bI \oplus \mbA   \to \bA, \quad (x; i , a) \mapsto (x; i+a)
$$
is a bundle algebra morphism lying over $\bA / \bI$ in the sense that
$$
 \hat \bI \times_\K (\bA/\bI) = \K \oplus \bI \oplus (\mbA/\bI)   \to \bA/\bI , \quad (x; i , [a]) \mapsto (x; [i+a])
$$
is simply projection onto the second factor. 
\end{enumerate}
\end{lemma}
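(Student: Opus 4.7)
The plan is to verify each of the four assertions by direct, local computation in the algebras, using the product formulas for tensor and fibered products displayed just before the lemma; in every case the essential preliminary check is that the $\K$-component is preserved, which makes the map a bundle-algebra morphism as soon as it is shown to be multiplicative and unital.

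\medskip

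\textbf{For (1) and (2).} These are the cleanest. For $\mu$, I would expand a general element of $\bA \otimes_\K \bA$ in the decomposition $\K \oplus \mbA \oplus \mbA \oplus (\mbA \otimes_\K \mbA)$ as $(x; ya, xb, a\otimes b)$ (coming from $(x,a)\otimes(y,b)$), note that $\mu$ sends this to $(xy,\,xb+ya+ab)$, and observe two things: the $\K$-component $xy$ is preserved, and multiplicativity of $\mu$ is exactly the statement that the product in $\bA$ is commutative (which is assumed). Unitality is immediate. For the flip $\tau$, I would simply note that for commutative $\bA$ the map $a \otimes b \mapsto b \otimes a$ is the standard symmetry of $\otimes_\K$, hence an algebra automorphism, and that it fixes the summand $\K$ in the decomposition above, so it is a morphism of bundle algebras.

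\medskip

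\textbf{For (3).} I would carry out the two relevant verifications by brute force on the fibered-product formula. For $\alpha$, compute $\alpha\bigl((x;a,b)(x';a',b')\bigr)$ by first applying the product rule of $\bA \times_\K \bA$ and then adding the two fiber coordinates, and separately compute the product $\alpha(x;a,b)\cdot \alpha(x';a',b') = (x,a+b)(x',a'+b')$. The two expressions differ a priori by the mixed terms $ab' + a'b$, which vanish precisely because $\mbA^2 = 0$ in a vector algebra; the two products $aa'$, $bb'$ and $(a+b)(a'+b')$ all vanish for the same reason. Unitality and $\K$-preservation are transparent. The map $\rho_r$ is treated analogously: in $\rho_r((x,a)(x',a'))$ versus $\rho_r(x,a)\rho_r(x',a')$, the discrepancy is $r(xa'+x'a)-(rx a'+rx'a)=0$ together with a term involving $r^2 aa'$, which again vanishes because $aa'=0$.

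\medskip

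\textbf{For (4).} This is the only part that requires something more than commutativity or $\mbA^2 = 0$, namely the \emph{vector-ideal} hypothesis $\bI\cdot\mbA=0$ (which also implies $\bI\cdot\bI=0$, so that $\hat\bI$ is automatically a vector algebra). I would compute $\beta$ applied to the product $(x;i,a)(x';i',a')$ in $\hat\bI\times_\K \bA$ using the fibered-product rule, and compare with $(x,i+a)(x',i'+a')$ expanded in $\bA$; the two agree once one notices that the cross terms $ia'$ and $ai'$ vanish by the vector-ideal condition. Preservation of $\K$ and of the unit are immediate, so $\beta$ is a morphism of bundle algebras. For the final statement about ``lying over $\bA/\bI$'', I would simply observe that passing to the quotient sends $i+a$ to $[a]$, so the induced map on $\hat\bI\times_\K(\bA/\bI)$ is indeed projection onto the second factor.

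\medskip

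\textbf{Main obstacle.} There is no real conceptual difficulty; the statement is a package of bookkeeping identities. The only point where one must be attentive is to recognize, in (3) and (4), which bilinear terms vanish for which structural reason (commutativity for (1)--(2); $\mbA\cdot\mbA=0$ for (3); $\bI\cdot\mbA=0$ for (4)), so that one sees why the hypotheses are optimal rather than merely sufficient. Accordingly, I would present the four items in parallel by tabulating, for each map, (i) the $\K$-component, (ii) the pre- and post-multiplication on the fiber, and (iii) the exact cancellation hypothesis used.
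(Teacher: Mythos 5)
Your proposal is correct and follows essentially the same route as the paper: a direct coordinate computation with the displayed product formulas for $\otimes_\K$ and $\times_\K$, identifying in each item which bilinear terms vanish (commutativity for (1)--(2), $\mbA\cdot\mbA=0$ for (3), $\bI\cdot\mbA=0$ for (4)), and observing $[i+a]=[a]$ for the last claim of (4). The only blemish is a typo in your expansion for (1), where the $\K$-component of $(x,a)\otimes(y,b)$ should read $xy$ rather than $x$, as your own subsequent sentence already indicates.
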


\begin{proof}
(1) and (2)   hold for any commutative associative algebra $\bA$. To prove (3),
 assume $\bA$ is a vector algebra. Then
\begin{eqnarray*}
\alpha ((x,a,b)\cdot (x',a',b'))&=& \alpha (xx', xa'+x'a,xb'+x'b) =
(xx',  xa'+x'a + xb'+x'b)  \cr
&=&
(x,a+b) (x',a'+b') = \alpha(x,a,b) \cdot \alpha(x',a',b')
\end{eqnarray*}
and $\rho_r ((x,a)(x',a')) = \rho_r (xx',xa'+x'a) = (xx', rxa' + rx'a)= \rho_r(x,a) \cdot \rho_r(x',a')$.

(4) Similarly, by direct computation,  with $ai' = 0 =a'i = ii'$,  
\begin{eqnarray*}
\beta ((x,i,a)\cdot (x',i',a'))&=& \beta (xx', xi'+x'i,xa'+x'a + aa') \cr
&=&
(xx',  xi'+x'i + xa'+x'a + aa')  \cr
&=&  (xx', x(a'+i') + x'(a+i) + (a+i)(a'+i')) \cr
&=& 
(x, i +a) (x',i'+a') = \beta(x,i,a) \cdot \beta(x',i',a') ,
\end{eqnarray*}
and the last claim is immediate since $[i+a]=[a]$.
\end{proof}

\section{Weil spaces and Weil laws}\label{sec:Weilsp}

\subsection{The category of Weil spaces}

\begin{definition}\label{def:Weisp1}
A {\em $\K$-Weil space}  is a covariant functor
$$
\ul{M} : \ul\Walg_\K \to \ul\set , \quad \bA \mapsto M^\bA  , \phi \mapsto M^\phi .
$$
A {\em morphism
 between $\K$-Weil spaces $\ul M$ and $\ul N$}, also called {\em $\K$-Weil law},
  is a natural transformation 
$$
\ul f : \ul M \to \ul N .
$$
In other words, for each $\bA \in \ul\Walg_\K$, there are mappings
$$
f^\bA :M^\bA \to N^\bA
$$  
varying functorially with $\bA$:  for any Weil algebra morphism $\phi:\bA \to \bB$ and Weil spaces
$\ul M, \ul N$, there are
maps $M^\phi$, $N^\phi$ such that the following diagram commutes:
\[
\begin{matrix} 
M^\bA & \buildrel{f^\bA}\over{\longrightarrow} & N^\bA \cr
M^\phi \downarrow \phantom{M_f}  & & \phantom{M_f} \downarrow N^\phi \cr
M^\bB & \buildrel{f^\bB}\over{\longrightarrow} & N^\bB 
\end{matrix}
\]
Weil spaces and Weil laws over $\K$ become a category, denoted by $\ul{\ul \Weil}_\K$, if we define the
{\em composition} $\ul g \circ \ul f$ of two $\bK$-Weil  laws $\ul f:\ul M \to \ul N$, $g:\ul N \to \ul P$ by
$$
\forall \bA \in \Walg_\K: \qquad (\ul g\circ \ul f)^\bA := g^\bA \circ f^\bA : M^\bA \to P^\bA.
$$
 Isomorphisms and automorphisms in the category of Weil spaces will be called {\em Weil isomorphisms}, resp.\ {\em Weil automorphisms}.
 \end{definition}

\begin{definition}\label{def:Weilsp2}
Replacing the category $\ul\Walg_\K$ by some other category of $\K$-algebras, we may
define categories of ``stronger'' or ``weaker'' Weil spaces.
For instance, taking  $\ul\Walg_\K^k$ (Weil algebras of height at most $k$),
we get a weaker category of ``$k$ times differentiable Weil spaces'', and
taking the full category of (free and finite dimensional) commutative $\K$-algebras, we get 
``algebraic Weil spaces''.\footnote{R.\ Lavendhomme
(\cite{La87}, p.\ 195, referring to Kock \cite{Ko77}) considers this category as a very simple, but logically satisfying model for the
axiomatic setting of synthetic differential gometry.}
\end{definition}

\nin
As said in the introduction, there should be some category of bundle algebras such that the ``stronger'' category
of Weil spaces defined by it should correpond to what one might call ``smooth Weil spaces''.

\begin{definition}  The {\em direct product} $\ul M \times \ul N$ of two Weil spaces is defined by
$$
\forall \bA \in \ul\Walg_\K: \qquad  (M\times N)^\bA := M^\bA \times N^\bA ,
$$
and the {\em direct product of Weil laws} $\ul f \times \ul g$ is defined similarly.
\end{definition}

\begin{definition}\label{def:subspace}
A {\em subspace} $\ul U$ of a Weil space $\ul M$ is a Weil space $\ul U$ such that, for
each Weil algebra $\bA$, $U^\bA$ is a subset of $M^\bA$.
If each $U^\bA$ is of cardinality $1$, then $\ul U$ is called a {\em point in $\ul M$}.
\end{definition}

\nin Obviously,
intersection $\ul U \cap \ul U'$ and union $\ul U \cup \ul U'$ of subspaces of $\ul M$, defined by
$$
(\ul U \cap \ul U')^\bA := U^\bA \cap (U')^\bA, \qquad
(\ul U \cup \ul U')^\bA := U^\bA \cup (U')^\bA
$$
are again subspaces of $\ul M$, and this holds also for arbitrary intersections and unions.

\subsection{Fibered structure over the base $M$}

\begin{definition}
The {\em  underlying set} of a $\K$-Weil space $\ul M$ is the set
 $M:= M^\K$, and the {\em underlying map} of a $\K$-Weil law $\ul f : \ul M \to \ul N$ is the
 map $f:=f^\K: M \to N$.
 \end{definition}
 
The underlying  map $f$ does in general not determine the abstract law $\ul f$
(see examples, next section). 
Applying functoriality of $\ul f$ to projection
$\pi:\bA \to \K$, resp.\ zero section $\zeta:\K\to \bA$,
 of a Weil algebra $\bA$,
we get:

\begin{lemma}
Weil spaces and Weil laws are {\em fibered over their underlying set theoretic objects} in the sense that,
for all Weil algebras $\bA$, the following diagram commutes
\[
\begin{matrix} 
M^\bA & \buildrel{f^\bA}\over{\longrightarrow} & N^\bA \cr
M^\pi \downarrow \phantom{M_f}  & & \phantom{M_f} \downarrow N^\pi \cr
M  & \buildrel{f }\over{\longrightarrow} & N
\end{matrix} \, .
\]
Moreover, these fibratations  have canonical {\em zero sections} in the sense that
the following diagram commutes
\[
\begin{matrix} 
M^\bA & \buildrel{f^\bA}\over{\longrightarrow} & N^\bA \cr
M^\zeta \uparrow \phantom{M_f}  & & \phantom{M_f} \uparrow N^\zeta \cr
M  & \buildrel{f }\over{\longrightarrow} & N
\end{matrix} \, .
\]
\end{lemma}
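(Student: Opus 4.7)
The plan is to observe that the lemma is really just an instance of the naturality condition that is built into the definition of a Weil law, applied to the two canonical Weil algebra morphisms $\pi : \bA \to \K$ and $\zeta : \K \to \bA$ associated with every bundle algebra $\bA = \K \oplus \mbA$.

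First I would recall from the previous definition that $\pi$ and $\zeta$ are genuine morphisms in the category $\ul{\Walg}_\K$: $\pi$ is a $\K$-algebra homomorphism with kernel $\mbA$ (hence preserves the fiber trivially), and $\zeta$ is the unital inclusion. Thus for any Weil space $\ul M$ the functoriality of $\ul M$ produces maps $M^\pi : M^\bA \to M^\K$ and $M^\zeta : M^\K \to M^\bA$, and the identification $M^\K = M$ (together with $M^{\id_\K} = \id_M$) makes these into the candidate ``projection to the base'' and ``zero section'' respectively. The same applies to $\ul N$.

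Second, for the two squares I would simply instantiate the naturality diagram given in Definition \ref{def:Weisp1} at the morphism $\phi := \pi$, which immediately yields the first diagram (with vertical arrows pointing downward, and bottom row $f^\K = f$), and at the morphism $\phi := \zeta$, which yields a commuting square
\[
\begin{matrix}
M^\K & \buildrel{f^\K}\over{\longrightarrow} & N^\K \cr
M^\zeta \downarrow \phantom{M_f} & & \phantom{M_f} \downarrow N^\zeta \cr
M^\bA & \buildrel{f^\bA}\over{\longrightarrow} & N^\bA
\end{matrix}
\]
which, after turning the vertical arrows upside down (they are maps $M \to M^\bA$ and $N \to N^\bA$), is exactly the second diagram of the statement.

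There is essentially no obstacle here: both assertions reduce to applying one axiom of natural transformations to one designated morphism, together with the identification $M^\K = M$, $N^\K = N$, $f^\K = f$. The only point worth flagging explicitly is that the relations $\pi \circ \zeta = \id_\K$ and the functoriality of $\ul M$ give $M^\pi \circ M^\zeta = \id_M$, so that the ``zero section'' $M^\zeta$ really does section the fibration $M^\pi$; this is an immediate bonus which might be worth recording in the same proof even though it is not part of the statement.
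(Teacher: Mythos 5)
Your proposal is correct and is exactly the paper's argument: the lemma is introduced there with the phrase ``Applying functoriality of $\ul f$ to projection $\pi:\bA \to \K$, resp.\ zero section $\zeta:\K\to \bA$,'' i.e.\ both squares are the naturality diagram of Definition \ref{def:Weisp1} instantiated at $\phi=\pi$ and $\phi=\zeta$, with $M^\K=M$, $N^\K=N$, $f^\K=f$. Your added remark that $\pi\circ\zeta=\id_\K$ forces $M^\pi\circ M^\zeta=\id_M$, so that $M^\zeta$ genuinely sections the fibration, is a worthwhile observation that the paper leaves implicit.
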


\nin The term ``fibration'' is used here in the abstract set-theoretic sense; there is no
condition of ``local triviality''  (since so far we do not consider any topology). 

\begin{definition}
The bundle $T^\bA M := M^\bA$ over $M$ will be called the {\em $\bA$-tangent bundle of $M$}, and
$T^\bA f := f^\bA$ the {\em $\bA$-tangent map of $f$}.
The fiber of $M^\bA$ over $x \in M$ will often be denoted by 
$$
T_x^\bA M:= (M^\bA)_x,
$$
and the map between the fibers over $x$ and $f(x)$ by
$$
T_x^\bA f : T_x^\bA M \to T_{f(x)}^\bA N .
$$
These maps preserve origins in fibers; we write $0_x$ or $\zeta(x)$ for the origin in $T_x^\bA M$.
When $\bA = T\K$, the ring of dual numbers over $\K$, then we simply write $TM$ and $Tf$
and just call them {\em tangent bundle}, resp.\ {\em tangent map of $f$}.
\end{definition}

In Chapter \ref{sec:Weilvar}
 we will introduce sufficient conditions ensuring that the fibers of $TM$ are linear spaces and
that tangent maps are linear maps in fibers, and we will see that this condition is satisfied by $TM$
for our main examples.

\subsection{Transitivity of extension}
For any smooth manifold $M$ over a topological ring $\K$, the extension $T^\bA M$ is
as manifold smooth over the ring $\bA = T^\bA \K$ (see \cite{BeS}) -- this is a conceptual version of what
Weil in \cite{We53} calls ``transitivit\'e des prolongements'' (loc.\ cit., th\'eor\`eme 5). 
In the present context, it  reads as follows:

\begin{theorem}[Transitivity of extension]\label{th:transitivity}
Given two Weil algebras $\bA$ and $\bB$ with a morphism $\bB \to \bA$ (in other words,  $\bA$ is a $\bB$-algebra), 
there is a natural {\em functor of scalar extension of
Weil spaces from $\bB$ to $\bA$}
$$
T^{\bA,\bB}:  \ul{\ul\Weil}_\B \to \ul{\ul\Weil}_\bA
$$
associating to a $\bB$-Weil space $\ul M$ the $\bA$-Weil space given on the level of objects by
$$
\ul \Walg_\bB \to \ul\set, \qquad 
\bD \mapsto M^{\bD \otimes_\bB \bA} 
$$
and similarly on the level of morphisms. 
In particular, there are natural {\em functors of scalar extension and scalar restriction}
\begin{align*}
T^{\bA,\K} : & \ul{\ul\Weil}_\K \to \ul{\ul\Weil}_\bA, \cr
T^{\K,\bB} : &
\ul{\ul\Weil}_\bB \to \ul{\ul\Weil}_\K, 
\end{align*}
and the composed ``$\bA$-tangent functor'' is defined by
$$
T^\bA : = T^{\K,\bA} \circ T^{\bA,\K} : 
 \ul{\ul\Weil}_\K \to \ul{\ul\Weil}_\K, \quad \ul M \mapsto \ul{M^\bA}
$$
All of  these functors are {\em product preserving functors} in the sense of \cite{KMS}, i.e.,  they are
 compatible with direct products.
\end{theorem}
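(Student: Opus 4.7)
The plan is to verify the three claims by unwinding definitions: each functor is defined objectwise via a tensor-product formula, on morphisms by a pullback along the same tensor product, and everything reduces to checking functoriality and preservation of products, with the only substantive input being that tensor products of Weil algebras remain Weil algebras at the appropriate base ring.

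First I would establish the algebraic input: given a morphism $\bB \to \bA$ and a Weil algebra $\bD$ over the appropriate base, the tensor product $\bD \otimes_\bB \bA$ is naturally a Weil algebra, carrying a $\bB$-algebra structure from either factor and an $\bA$-algebra structure from the right factor. Freeness and finite dimensionality are inherited from $\bD$ and $\bA$; the ideal of infinitesimals is a sum of nilpotent ideals in a commutative ring (hence nilpotent), with height bounded additively, exactly as in the paper's earlier remark on $\bA \otimes_\K \bB$. Thus $\bD \otimes_\bB \bA$ can be consistently viewed in whichever base category the next step requires.

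Next I would define $T^{\bA,\bB}$ explicitly: on an object $\ul M \in \ul{\ul\Weil}_\bB$, set $(T^{\bA,\bB}\ul M)^\bD := M^{\bD \otimes_\bB \bA}$; on a morphism $\phi:\bD \to \bD'$ of the source category, set $(T^{\bA,\bB}\ul M)^\phi := M^{\phi \otimes_\bB \id_\bA}$; on a Weil law $\ul f:\ul M \to \ul N$, set $(T^{\bA,\bB}\ul f)^\bD := f^{\bD \otimes_\bB \bA}$. Functoriality in $\bD$ for fixed $\ul M$, and functoriality of $T^{\bA,\bB}$ itself on objects and morphisms of $\ul{\ul\Weil}_\bB$, all reduce to functoriality of $\ul M$, naturality of $\ul f$, and functoriality of the base-change operation $-\otimes_\bB \bA$. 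The specializations $T^{\bA,\K}$ and $T^{\K,\bB}$ are the instances $\bB=\K$ and $\bA=\K$ (the latter applied via the canonical projection $\bB \to \K$), and the composition $T^\bA := T^{\K,\bA} \circ T^{\bA,\K}$ applied to $\ul M$ then evaluates, by a direct chase through the tensor-product identifications, to $(T^\bA\ul M)^\bE = M^{\bE \otimes_\K \bA}$, which is by construction the $\bA$-tangent bundle $\ul{M^\bA}$.

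Finally, product preservation is nearly automatic: direct products in $\ul{\ul\Weil}_\bB$ are computed pointwise, $(\ul M \times \ul N)^\bE = M^\bE \times N^\bE$, and $T^{\bA,\bB}$ only re-indexes via $\bD \mapsto \bD \otimes_\bB \bA$, so $(T^{\bA,\bB}(\ul M\times\ul N))^\bD = (M \times N)^{\bD\otimes_\bB \bA} = M^{\bD\otimes_\bB\bA} \times N^{\bD\otimes_\bB\bA} = (T^{\bA,\bB}\ul M)^\bD \times (T^{\bA,\bB}\ul N)^\bD$. The main obstacle is really just the careful bookkeeping of the several natural algebra structures on $\bD \otimes_\bB \bA$ (over $\bB$, and over $\bA$ through either factor), and being consistent about which is being used at each evaluation of $\ul M$; once conventions are fixed, the proof is entirely formal.
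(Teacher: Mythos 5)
Your proposal is correct and follows essentially the same route as the paper: everything is reduced to the functoriality and transitivity of the base-change operation $-\otimes_\bB \bA$ on (bundle) algebras, with the composition $T^{\K,\bA}\circ T^{\bA,\K}$ identified via the natural isomorphism $\bD\otimes_\bB(\bB\otimes_\K\bA)\cong\bD\otimes_\K\bA$, and product preservation read off from the pointwise definition of products of Weil spaces. The only (harmless) extra work you do is verifying nilpotency of the infinitesimal ideal of $\bD\otimes_\bB\bA$; the paper remarks that only bundle-algebra properties are actually needed here.
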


\begin{proof}
This follows directly from the corresponding transitivity properties of the scalar extension functor
(tensor product) on the level of rings, in particular from the natural isomorpism
$\bD \otimes_\bB (\bB \otimes_\K \bA) = \bD \otimes_\K \bA$.
(Note that only properties of bundle algebras are needed for this; nilpotency of the ideals plays
no r\^ole here.)
\end{proof}

\nin
The notation $\ul M^{\bA,\bB}:= T^{\bA,\bB}\ul M$ may also be useful: it indicates that we look at
``the $\bA$-bundle $M^\bA$, seen over $M^\bB$''.  The letter $T$ now stands for 
 a covariant bifunctor 
$$
T : \ul\Walg_\K \times \ul{\ul\Weil_\K} \to \ul{\ul\Weil_\K}, \quad (\bA, \ul M) \mapsto \ul{M^\bA} .
$$

\subsection{Internal hom-spaces, and cartesian closedness}
It  is well-known that  spaces of smooth maps between manifolds are rarely manifolds.
One of the motivations to develop topos theory and synthetic differential geometry is to
define categories which behave better with respect to this issue.
As said in the introduction, our model is rather primitive, compared to topoi used in
synthetic differential geometry; thus   the proof of the following result is
standard in category theory. 

\begin{theorem}[Internal hom-spaces, and cartesian closedness]
Let $\ul M$ and $\ul N$ be $\K$-Weil spaces. Then the morphisms ($\K$-Weil laws) from $\ul M$ to $\ul N$,
$$
\ul X := \ul \Mor (\ul M,\ul N),
$$
form again a $\K$-Weil space, by considering, for each $\bA \in \Walg_\K$, the set
$$
 X^\bA :=  \{  f^\bA : M^\bA \to N^\bA \mid \, \ul f \in \ul X \} 
$$
with natural transformations given for a morphism $\phi:\bA \to \bB$ by
$$
X_\phi : (X^\bA \to X^\bB, \, f^\bA \mapsto (u \mapsto f^{\bB} (M^\phi (u))).
$$
It follows that  the category $\ul{\ul \Weil}_\K$  of $\K$-Weil spaces  is
{\em cartesian closed}.
\end{theorem}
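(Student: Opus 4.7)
The plan is to recognize this as the standard cartesian closedness of a functor category $[\mathcal{C}, \ul\set]$ for small $\mathcal{C}$, applied to $\mathcal{C} = \ul\Walg_\K$, and to unpack that argument in the notation of the paper. The first step is to make the functor $\ul X$ precise. For each Weil algebra $\bA$, define the ``shifted'' Weil space $\ul{N^\bA}$ by $(N^\bA)^\bD := N^{\bA \otimes_\K \bD}$, which is a Weil space by Theorem \ref{th:transitivity}, and set $X^\bA := \ul\Mor(\ul M, \ul{N^\bA})$. At $\bA = \K$ this recovers $\ul\Mor(\ul M, \ul N)$, matching the stated content of the theorem; for general $\bA$ the element ``$f^\bA$'' of the statement is identified with such a Weil law via its $\bA$-parametrized extension. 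Functoriality of $\bA \mapsto X^\bA$ is established by observing that a morphism $\phi : \bA \to \bB$ induces, via $\phi \otimes \mathrm{id}_\bD : \bA \otimes_\K \bD \to \bB \otimes_\K \bD$ and the functor $\ul N$, a natural transformation $\ul{N^\bA} \to \ul{N^\bB}$, and $X_\phi$ is post-composition with this.

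Next I would construct the evaluation and transpose witnessing cartesian closedness. Define $\ul{\mathrm{ev}} : \ul X \times \ul M \to \ul N$ componentwise by the diagonal rule: for $(f,u) \in X^\bA \times M^\bA$, set $\mathrm{ev}^\bA(f,u) := N^\mu(f^\bA(u))$, where $f^\bA : M^\bA \to N^{\bA \otimes_\K \bA}$ is the $\bA$-component of the Weil law $f \in X^\bA$ and $\mu : \bA \otimes_\K \bA \to \bA$ is multiplication. Given a Weil law $\ul g : \ul P \times \ul M \to \ul N$, define its transpose $\tilde{\ul g} : \ul P \to \ul X$ by: for $p \in P^\bA$, let $\tilde g^\bA(p) \in X^\bA$ be the Weil law whose $\bD$-component sends $u \in M^\bD$ to $g^{\bA \otimes_\K \bD}(P^{\iota_1}(p), M^{\iota_2}(u))$, where $\iota_1 : \bA \hookrightarrow \bA \otimes_\K \bD$ and $\iota_2 : \bD \hookrightarrow \bA \otimes_\K \bD$ are the canonical inclusions (the coproduct coprojections from the pushout description of $\otimes_\K$ in $\ul\Walg_\K$).

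The remaining step is the verification. Naturality of $\tilde g^\bA(p)$ in $\bD$ follows from naturality of $\ul g$ applied to morphisms of the form $\mathrm{id}_\bA \otimes \psi$; naturality of $\tilde g^\bA$ in $\bA$ follows similarly using $\phi \otimes \mathrm{id}_\bD$; the two triangle identities (equivalently, $\ul{\mathrm{ev}} \circ (\tilde{\ul g} \times \mathrm{id}_{\ul M}) = \ul g$ and bijectivity of $\ul g \leftrightarrow \tilde{\ul g}$) reduce to the universal property of $\bA \otimes_\K \bD$ as a coproduct together with the identity $\mu \circ (\mathrm{id} \otimes \iota_1) = \iota_1 \circ \mathrm{id}$ (and the analogue for $\iota_2$). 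Naturality in $\ul P$, $\ul M$, $\ul N$ of the bijection $\ul\Mor(\ul P \times \ul M, \ul N) \cong \ul\Mor(\ul P, \ul X)$ is then automatic.

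The main obstacle is purely notational bookkeeping: one must keep separate the ``outer'' parameter algebra $\bA$ that labels $X^\bA$ and the ``inner'' algebra $\bD$ at which the Weil law stored in $X^\bA$ is evaluated, while the two get identified along the diagonal $\mu$ in the evaluation. Conceptually nothing new is needed: the only feature of $\ul\Walg_\K$ used is that $\otimes_\K$ realizes the coproduct, so neither nilpotency nor finite freeness of the infinitesimal ideal plays any role, and the proof carries over verbatim to the ``stronger'' and ``weaker'' categories of Definition \ref{def:Weilsp2}.
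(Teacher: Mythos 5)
Your proof is correct, but it takes a more explicit route than the paper, whose entire proof is the citation of \cite{BW}, Section 2.1, Theorem 4 (the generic fact that a functor category $[\ul\CC,\ul\set]$ is cartesian closed, with exponential given by the Yoneda formula $X^\bA=\Mor(\Hom(\bA,-)\times\ul M,\ul N)$). You instead exploit the specific structure of $\ul\Walg_\K$ — that $\otimes_\K$ is the coproduct — to rewrite the exponential as $X^\bA=\Mor(\ul M, T^\bA\ul N)$ with $(T^\bA N)^\bD=N^{\bA\otimes_\K\bD}$, evaluation via the codiagonal $\mu:\bA\otimes\bA\to\bA$, and transposition via the coprojections $\iota_1,\iota_2$. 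The two descriptions agree by co-Yoneda together with $\Hom(\bA,-)\times\Hom(\bD,-)\cong\Hom(\bA\otimes\bD,-)$, and your verifications (naturality of $\ul{\mathrm{ev}}$ reduces to $\mu_\bB\circ(\phi\otimes\phi)=\phi\circ\mu_\bA$; the triangle identities reduce to $\mu\circ\iota_i=\id$ and $\mu_{\bA\otimes\bD}\circ(\iota_1\otimes\id)\circ(\id\otimes\iota_2)=\id$) all check out. What your version buys is the geometrically meaningful exponential law $T^\bA\bigl(\ul\Mor(\ul M,\ul N)\bigr)=\ul\Mor(\ul M,T^\bA\ul N)$, expressed entirely in terms of the scalar-extension functors of Theorem \ref{th:transitivity}; what the generic argument buys is independence from any structure on the index category. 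You are also right to quietly repair the theorem's statement: the displayed $X^\bA$ (the set of $\bA$-components of \emph{global} laws, with a transition map $X_\phi$ whose output $u\mapsto f^\bB(M^\phi(u))$ does not even have domain $M^\bB$) cannot literally be the exponential object, and your $\Mor(\ul M,T^\bA\ul N)$ is the correct reading.

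Two small caveats. First, the closing claim that the argument carries over \emph{verbatim} to the categories of Definition \ref{def:Weilsp2} overreaches for $\ul\Walg_\K^k$: the tensor product of two algebras of height $\leq k$ has height up to $2k$, so $\otimes_\K$ is not the coproduct there; one must either truncate ($\bA\otimes\bB$ modulo the $(k+1)$-st power of its fiber) or fall back on the generic Yoneda formula — the conclusion survives, but not the formula. Second, the identity you quote as $\mu\circ(\id\otimes\iota_1)=\iota_1\circ\id$ is garbled; the ones actually needed are $\mu\circ\iota_1=\id=\mu\circ\iota_2$ and the three-fold composite above.
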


\begin{proof}
Direct check of definitions, see \cite{BW}, Section 2.1, Theorem 4.
\end{proof}

\section{Examples of Weil spaces}\label{sec:Weilex}

\subsection{$\K$-modules}
Any $\K$-module $V$ gives rise to a Weil space
\begin{align*}
\ul {V}:  
\ul{\Walg}_\K & \to \ul\set,  \cr
 \bA &  \mapsto V^\bA := V \otimes_\K  \bA , \quad 
(\phi : \bA \to \bB) \mapsto (\id_V  \otimes_\K \phi : V^\bA \to V^\bB).
\end{align*}
In fact, it gives rise to the stronger structure of an ``algebraic Weil space'' (cf.\
Definition \ref{def:Weilsp2}), since it defines also a functor on {\em all} scalar extensions
$$
\ul {\bf V}:  \ul{\Alg}_\K \to \ul \set,   \qquad  \bA \mapsto V^\bA := V \otimes_\K  \bA , 
\phi \mapsto \id_V \otimes_\K  \phi .
$$
The morphisms in this stronger sense are precisely the {\em polynomial laws}:

\begin{definition}[N.\ Roby, \cite{Ro63}]
 A {\em polynomial law} between two $\K$-modules $V$ and $W$  is a 
natural transformation $\ul f: \ul{\bf V} \to \ul{\bf W}$.
\end{definition}

As shown in loc.\ cit.\ (cf.\ also \cite{Lo75}, Appendix),
 the underlying map $f: V \to W$ of a polynomial law corresponds
to classical concepts of polynomial mappings. 
By restriction to the subcategory $\ul\Walg_\K$, any polynomial law gives rise to a Weil 
law; in particular, linear maps define Weil laws. 
When nothing else is
said, a $\K$-module $V$ will always be considered as Weil space $\ul V$ in the way defined above.

\begin{definition} \label{def:triv}
Let $V$ be a  $\K$-module.
The {\em canonical trivialization}  of the Weil space $\ul V$ is given by
the decomposition, for each Weil algebra $\bA$,
$$
V^\bA = V \otimes (\bK \oplus \mbA) =  V \oplus (V \otimes \mbA) =:  V \oplus V^\mbA .
$$ 
\end{definition}

Using the trivialization,  tangent maps give  rise to {\em differentials}:
elements of $V^\bA$ are written
in the form
$u = (x,v)$ or $u=x \oplus v$. A Weil law $\ul f:\ul V \to \ul W$ is written, with respect to the
trivializations,  $f^\bA(x,v)= (g(x,v),h(x,v))$. Since $f^\bA$ is fibered over $f:V\to W$, the first 
component is just $f(x)$, so that, letting $d^\bA f(x)v:=h(x,v)$, 
\begin{equation}\label{eqn:Adiff}
f^\bA (x,v) = \bigl( f(x), d^\bA (x)v) \bigr) = f(x) \oplus d^\bA f(x) v.
\end{equation}
The condition $z^W \circ f = f^\bA \circ z^V$ gives us
$d^\bA f (x)0 = 0$.

\begin{example} If $\bA = T\K =\K \oplus \eps \K$ ($\eps^2 = 0$), then
$V^\bA = TV = V \oplus \eps V$, and, letting $df(x):= d^{T\K} f(x)$,
 (\ref{eqn:Adiff}) is the analog of the usual
first order Taylor expansion
$$
Tf (x+\eps v) = f(x) + \eps df(x) v .
$$
In section \ref{sec:Weilvar} we show that, for all $x\in V$, the map
$df(x):V \to W$ is linear.
\end{example}

\begin{example} Assume $f:V \to W$ is $\K$-linear. Then
$Tf:TV \to TW$ is its $T\K$-linear extension, whence 
$Tf(x+\eps v) = f(x) + \eps Tf(v) = f(x) + \eps f(v)$.
Comparing with the preceding formula, we get 
$df(x)v=f(v)$;
thus $df(x)=f$ is constant.
Likewise, the differential of a bilinear map $b:V \times V \to W$ is computed ``as usual''.
\end{example}

\begin{example}
For any Weil algebra $\bB$, $\ul \bB$ is the Weil space associating to $\bA$ the tensor product $\bB \otimes_\K \bA$.
In particular, the {\em affine line} $\ul \K$ associates to $\bA$ its underlying set.
\end{example}

\begin{example} Let  $\ul V=\ul \K$ be the affine line.
The product map $m:\K\times \K \to \K$ is bilinear, hence polynomial, and gives rise to a Weil
law $\ul m$, where  $m^\bA$ is simply the product map of $\bA$. Similarly for the addition map.
Summing up, the Weil functor $T^\bA$, applied to the base ring $\K$,
yields the ring structure of $\bA$.
Similarly, if $\g$ is a $\K$-algebra (associative, Lie, or other), then
$\ul \g$ is the law given by scalar extension $\bA \mapsto \g^\bA = \g \otimes_\K \bA$
(which is again associative, Lie, or other), and hence $\ul \g$ becomes an algebra object (associative, Lie or other)
in the category of Weil spaces. 
For instance, we may speak of the
associative Weil law $\ul{M(n,n;\K)}$.
\end{example}

\begin{example}
Vor $V = 0$,  we obtain
the terminal object associating $\K$ to each $\bA$ 
\begin{equation}\label{eqn:point}
\ul 0  : \, \bA \mapsto \K .
\end{equation}
A point $\ul p$  in $\ul M$ is the same as a morphism $\ul p:\ul 0 \to \ul M$. 
\end{example}

\subsection{Domains in $\K$-modules} 
Using the trivialization of $V^\bA$, we can mimick the definition of the ``usual'' tangent bundle
for any subset of $V$ (thought of as ``open''):

\begin{definition}
Let $V$ be a $\K$-module. A {\em domain in $\ul V$} is  the Weil space $\ul U$ given by a 
non-empty subset
$U \subset V$: 
to $\bA$, it assigns the subset of $V^\bA$ defined by
$$
 U^\bA := U \times (V \otimes_\bK \mbA) = U \times V^{\mbA} \subset
 V^\bA = V \times V^{\mbA}
$$
and to a morphism $\phi:\bA \to \bB$ the restriction of $\id \otimes \phi$ from $U^\bA$ to $U^\bB$. 
\end{definition}

\begin{example} We may take a singleton $U = \{ p \}$. Then the ``tangent space'' is $T_p \{ p \} = \eps V$, corresponding
to the idea that $U$ is considered as ``open'' in $V$. 
We denote by $\ul{0_V}$ the functor defined by the singleton $\{ 0_V \}$:
\begin{equation}
\ul{0_V} : \bA \mapsto \{ 0_V \} \times V^{\mbA} .
\end{equation}
Thus $\ul{0_V}$ represents the ``collection of all  $\bA$-tangent spaces of $V$ at $0$'', whereas $\ul 0$ represents 
the ``common origin'' in these tangent spaces. 
\end{example} 

\begin{example} 
(Inversion law.) 
Let $U:=\K^\times \subset V =\K$ and $\ul i : \ul U \to \ul U$ the law ``inversion''.
Since $\mbA$ is nilpotent, the set $U^\bA = \K^\times \times \mbA$ is precisely
the set $\bA^\times$ of invertible elements in $\bA$, and then we let
$i^\bA (u) = u\inv$, the inversion map of $\bA$.
This defines a Weil law. Note that, if $\K = \Z$, whence $U = \{ 1,-1 \}$, the base map
is $i^\K = \id_U$, but the Weil law $\ul i$ is different from the identity law $\ul\id_U$
(for instance, $i^{T\K}(1 + \eps v) = 1 - \eps v$;
only if $\K = \Z / 2 \Z$,  we cannot distiguish $\ul i$ and $\ul{\id}_U$).
\end{example}

\subsection{Subspaces defined by equations, and affine algebraic Weil spaces}\label{subsec:Zf}
Let $\ul M$ be a Weil space and
 $S$  some set of scalar valued Weil  laws
$\ul f: \ul{M} \to \ul \K$. Then we define its {\em zero locus} to be the functor
$$
\ul{Z(S)}: \bA \mapsto Z(S)^\bA:=\{ x \in M^\bA \mid \, \forall \ul f \in \ul S :
f^\bA (x) = 0 \} 
$$
and associating to $\phi:\bA \to \bB$ the restriction of $M^\phi$;  
then $\ul{Z(S)}$ is a $\K$-Weil space, in fact, a subspace of $\ul M$, called {\em 
the subspace defined by the equations $S$}.
In case $\ul M = \ul V$ is a $\K$-module and the equations are polynomial laws, we may
call it also an  {\em affine algebraic Weil space}. 
Of course, instead of scalar valued laws one might also take laws with other target spaces. 

\begin{example}
The Weil space $\ul{\SL(n,\K)}$ is of this type: it associates to $\bA$ the set
$\SL(n,\bA)$. Here, $S = \{ \ul{\det - 1} \}$. Since $\det(1 + \eps X) = 1 + \eps \, {\rm tr} ( X)$,
the tangent space at $1$ is the space of matrices of trace zero.
\end{example}

\subsection{Weil  manifolds}
\label{sec:manifolds}

A {\em Weil manifold} is a Weil space with the additional structure of being a 
{\em functor from the category of Weil algebras into the category of set theoretic manifolds modelled on
some $\K$-module $V$}:

\section{Weil manifolds}\label{sec:Weilmf}

\subsection{Set theoretic manifolds}
This is what remains if, in the usual definition of topological
manifolds, we retain the atlas and forget about the given topology:

\begin{definition}
A {\em set theoretic manifold over $\bK$} is given by 
$(M,V,(U_i,\phi_i,V_i)_{i\in I})$, where $M$ is a set, $V$ is a $\bK$-module,
and, for each $i$ belonging to an index set $I$,
$U_i \subset M$ and $V_i \subset V$ are non-empty subsets such that $M = \cup_{i\in I} U_i$,
and $\phi_i : U_i \to V_i$ is a set-theoretic isomorphism (bijection).
We then also say that ${\mathcal A} = ( U_i,  \phi_i,V_i)_{i\in I}$
is an {\em atlas on $M$ with model space $V$}, and we say that the topology
generated by the sets $(U_i)_{i\in I}$ on $M$ is the
{\em atlas-topology on $M$}. 
The atlas is called {\em saturated} if the $U_i$ form a basis of the atlas-topology.
\end{definition}

At this stage, the ring $\bK$ does not play any r\^ole.
If the atlas is made of ``big'' charts, then the atlas topology will not be separated (e.g.,
projective spaces $M=\K \PP^n$, when $\K$ is a field, with the usual atlas given
by $n+1$ canonical charts). On the other hand, the definition does not exclude charts
given by singletons, so that the atlas-topology may be discrete.  In practice, when
$M$ already carries some topology, one will require that the atlas-topology is coarser
than the given one. 
Given an atlas, we let for $(i,j) \in I^2$, 
\begin{equation}
U_{ij}:= U_i \cap U_j \subset M , \qquad V_{ij}:= \phi_j(U_{ij}) \subset V ,
\end{equation}
and the {\em transition maps} belonging to the atlas are defined by
\begin{equation}
 \phi_{ij}:= \phi_i\circ{\phi_j^{-1}} \vert_{ V_{ji} }:V_{ji}\to V_{ij} .
\end{equation}
They are bijections satisfying the {\em cocycle relations}
\begin{equation}\label{eqn:cocycle}
 \phi_{ii}=\id \quad  \mbox{and} \quad    \phi_{ij}\phi_{jk}= \phi_{ik} \ \mbox{ (where defined)}.
\end{equation}

\begin{definition}
A {\em morphism of set theoretic manifolds} $(M,{\mathcal A})$, $(N,{\mathcal A}')$
is an {\em atlas-continuous map} $f:M\to N$, i.e., a map which is continuous
with respect to the atlas-topologies on $M$ and $N$.
\end{definition}

The continuity condition  permits to recover the morphism from local data, see theorem below.
Obviously, set theoretic manifolds [modelled on $\K$-modules $V$] 
with their morphisms form  a category, denoted by
$\ul{\rm SetMf}_\K$. 
Recall  that a manifold can be reconstructed from local data, as follows:

\begin{theorem}[Reconstruction from local data]\label{th:reconstruct}
Assume  given the following data:
\begin{itemize}
\item a  $\K$-module $V$ (the model space), and an index set $I$,
\item  subsets  $ V_{ij} \subset V$,  for $i,j \in I$,
\item bijections  $( \phi_{ij} :V_{ij}\to V_{ji})_{i,j\in I}$ satisfying the cocycle relations
(\ref{eqn:cocycle}).
\end{itemize}
Then there exists a unique (up to isomorphism)
 set theoretic manifold $M$ having $V$ as model space and  the
$ \phi_{ij}$ as transition laws. 
Moreover, $f:M\to N$ is a morphism if and only if all 
$f_{ij}:= \psi_i \circ f \circ \phi_j\inv$ (restricted to suitable intersections of chart domains)
are morphisms.
\end{theorem}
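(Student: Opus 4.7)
The plan is to realize $M$ as a quotient of a disjoint union of chart images, with the cocycle relations governing the identifications. Concretely, set $\widetilde M := \bigsqcup_{i\in I}(\{i\}\times V_{ii})$, where the convention $\phi_{ii}=\id$ ensures that $V_{ii}$ serves as the ``full $i$-th chart image in the model space''. Declare $(x,i)\sim (y,j)$ precisely when $x\in V_{ji}$, $y\in V_{ij}$ and $\phi_{ij}(x)=y$, and let $M:=\widetilde M/{\sim}$ with quotient map $q$.

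First I would verify that $\sim$ is an equivalence relation, using the three cocycle identities: reflexivity from $\phi_{ii}=\id$, symmetry from $\phi_{ij}\phi_{ji}=\phi_{ii}=\id$ (so $\phi_{ij}$ and $\phi_{ji}$ are mutually inverse where defined), and transitivity from $\phi_{ik}\phi_{kj}=\phi_{ij}$. Next I would set $U_i:=q(\{i\}\times V_{ii})$ and define $\phi_i:U_i\to V_{ii}$ as the inverse of $q|_{\{i\}\times V_{ii}}$; the decomposition $M=\bigcup_i U_i$ holds by construction. A direct check shows $\phi_j(U_i\cap U_j)=V_{ij}$ and that the induced atlas transition maps $\phi_i\circ\phi_j^{-1}:V_{ij}\to V_{ji}$ agree with the prescribed $\phi_{ij}$, so the given data is faithfully reproduced.

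For uniqueness, given any other set theoretic manifold $(M',\mathcal A')$ presenting the same data, the rule ``read a point in its $i$-th chart and transport it to $M'$ via the $i$-th chart there'' yields a bijection $M\to M'$, well-definedness being precisely the cocycle compatibility; since charts on both sides are preserved, this is an isomorphism in $\ul{\rm SetMf}_\K$. The morphism characterization is then routine: if $f:M\to N$ is atlas-continuous, each $f_{ij}=\psi_i\circ f\circ\phi_j^{-1}$ is atlas-continuous as a composition of chart inverses, $f$, and a chart. Conversely, if every $f_{ij}$ is a morphism, then atlas-continuity of $f$ is checked on the covering $\{U_j\}$ of $M$, and on each $U_j\cap f^{-1}(U'_i)$ the map $f$ factors as $\psi_i^{-1}\circ f_{ij}\circ\phi_j$, reducing continuity of $f$ to the given hypothesis.

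The main obstacle I anticipate is the bookkeeping around ``atlas-continuity'' for subsets $V_{ij}\subset V$ of the model space: one must first fix how such a subset is regarded as an object of $\ul{\rm SetMf}_\K$ (most naturally via the singleton atlas $\{(V_{ij},\id,V_{ij})\}$, giving the atlas-topology generated by $V_{ij}$ itself, or via the restriction of a chosen global atlas on $V$), and then confirm that with this convention the constructed charts $\phi_i$ are genuine isomorphisms of set theoretic manifolds and that the morphism criterion phrased in terms of the $f_{ij}$ is meaningful within $\ul{\rm SetMf}_\K$.
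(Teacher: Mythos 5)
Your construction is exactly the paper's: the paper also realizes $M$ as the quotient of $\{(i,x)\mid x\in V_{ii}\}\subset I\times V$ by the equivalence relation induced by the $\phi_{ij}$, takes $U_i$ and $\phi_i$ from the quotient map, and leaves the verification of the cocycle-based equivalence-relation check, uniqueness, and the morphism criterion as routine details. Your write-up simply fills in those omitted details (and sensibly flags the bookkeeping about which atlas one puts on the subsets $V_{ij}\subset V$), so it is correct and takes essentially the same route.
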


\begin{proof}
Existence: 
 define $M$  to be the quotient  $M:=S/\sim$, where 
$S:=\{(i,x)|x\in V_{ii} \}\subset I\times V$ with respect to the equivalence relation
 $(i,x)\sim(j,y)$ if and only if $(\phi_{ij})(y)=x$. 
 We then put 
$V_i:=V_{ii}$, $U_i:=\{[(i,x)],x\in V_i\}\subset M$ and $\phi_i:U_i\to V_i, [(i,x)]\mapsto x$.
All properties, as well as uniqueness, 
are  now checked in a straightforward way; we omit the details (cf.\   \cite{BeS, Be13}.)
\end{proof}

\begin{example}
If all $U_{ij}$ are empty for $i \not= j$, then $M$ is just the disjoint union of the sets $V_i:=V_{ii}$.
On the other hand, every subset $U \subset V$ is a manifold (with $\vert I \vert = 1$).
\end{example}

\subsection{Weil manifolds}

\begin{definition}\label{def:Weil-manifold}
A {\em $\K$-Weil manifold (with atlas)} is a functor from the category of Weil algebras into the category
of set-theoretic manifolds; in other words, it
is a Weil space
$\ul M$ together with an {\em atlas}  $\ul{\mathcal A} = (\ul U_i, \ul \phi_i, \ul V_i)_{i\in I}$
modelled on some $\K$-module $V$, i.e., for each $\K$-Weil algebra $\bA$,
${\mathcal A}^\bA$ is an $\bA$-atlas on $M^\bA$ with model space $V^\bA$.
A {\em law of Weil manifolds} is a natural transformation
of Weil manifolds. 
\end{definition}

\nin
Weil manifolds with their laws obviously form a category, which we denote by
$\ul{\ul \Wman}_\K$.
For $i,i \in I$, we let $\ul V_{ij}:=\ul \phi_i ( \ul U_i \cap \ul U_j)$ (subspace of $\ul V_i$). 
Since the $\ul \phi_i$, are Weil isomorphisms, it follows that the {\em transition laws}
$$
\ul \phi_{ij}:=\ul \phi_i\circ{\ul \phi_j^{-1}} \vert_{\ul{\phi_j(V_{ji})}}:V_{ji}\to V_{ij}
$$
are Weil isomorphisms.  
From Theorem \ref{th:reconstruct} we get:

\begin{theorem}{\rm (Reconstruction from local data)}
Assume  given the following data:
\begin{itemize}
\item a  $\K$-module $V$ (the model space), and an index set $I$,
\item  Weil subspaces  $\ul V_{ij} \subset \ul V$,  for $i,j \in I$,
\item $\K$-Weil laws  $(\ul \phi_{ij} :\ul V_{ij}\to \ul V_{ji})_{i,j\in I}$ satisfying
for each Weil algebra $\bA$ the cocycle relations (\ref{eqn:cocycle}).
\end{itemize}
Then there exists a unique (up to isomorphism)
 Weil manifold $\ul M$ having $\ul V$ as model space and  the
$\ul \phi_{ij}$ as transition laws. 
\end{theorem}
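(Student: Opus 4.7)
The plan is to reduce the statement to the set-theoretic reconstruction theorem (Theorem~\ref{th:reconstruct}) by applying it pointwise for each Weil algebra $\bA$, and then to verify that the resulting assignment $\bA \mapsto M^\bA$ is functorial in $\bA$ and gluess into a genuine Weil manifold with atlas.

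First, I fix a Weil algebra $\bA$. The given data restricts to subsets $V_{ij}^\bA \subset V^\bA$ and bijections $\phi_{ij}^\bA : V_{ji}^\bA \to V_{ij}^\bA$ satisfying the cocycle relations~(\ref{eqn:cocycle}), so Theorem~\ref{th:reconstruct} yields a set-theoretic manifold $M^\bA$ modelled on $V^\bA$, unique up to isomorphism, constructed explicitly as $S^\bA/{\sim}$ where $S^\bA = \{(i,x) \mid x \in V_{ii}^\bA\}$ and $(i,x)\sim(j,y)$ iff $\phi_{ij}^\bA(y)=x$. This simultaneously provides chart data $U_i^\bA \subset M^\bA$ and bijections $\phi_i^\bA : U_i^\bA \to V_{ii}^\bA$ recovering the prescribed transitions.

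Next, for any morphism $\phi:\bA\to\bB$ of Weil algebras, I would build $M^\phi : M^\bA \to M^\bB$ as follows. Since each $\ul V_{ij}$ is a Weil subspace of $\ul V$, the map $V^\phi = \id_V \otimes \phi$ carries $V_{ij}^\bA$ into $V_{ij}^\bB$. Naturality of $\ul\phi_{ij}$ then gives the identity $\phi_{ij}^\bB \circ V^\phi = V^\phi \circ \phi_{ij}^\bA$ on $V_{ji}^\bA$; this is precisely what is needed for the pointwise map $(i,x) \mapsto (i, V^\phi(x))$ on $S^\bA$ to descend to a well-defined map $M^\phi$ on the quotient. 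Functoriality $M^{\phi\circ\psi} = M^\phi \circ M^\psi$ and $M^{\id} = \id$ then follow from the corresponding identities for the tensor extension $V^\phi$. Atlas-continuity of $M^\phi$ is automatic, since in charts it is nothing but $V^\phi$ restricted to $V_{ii}^\bA$; gluing the $\ul U_i$ and $\ul\phi_i$ together produces the required Weil atlas $\ul{\mathcal A}$ on $\ul M$. For uniqueness, if $\ul{M'}$ carries the same local data, the pointwise uniqueness clause of Theorem~\ref{th:reconstruct} produces isomorphisms $\eta^\bA : (M')^\bA \to M^\bA$, and these are automatically natural in $\bA$ because their expressions in the shared charts are the identity and must therefore commute with the common $V^\phi$.

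The main obstacle is the coherence check in the middle step: one must verify that the equivalence relation defining $M^\bA$ is compatible with $V^\phi$, so that $M^\phi$ is well defined on classes and that composition is respected. This is the single place in the argument where one genuinely uses that the $\ul\phi_{ij}$ are Weil \emph{laws}, not merely families of bijections indexed by $\bA$; once this naturality is invoked, everything else is a routine transcription of the set-theoretic proof into a functorial setting.
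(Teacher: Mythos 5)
Your proof is correct and follows the same route as the paper, which simply invokes the set-theoretic reconstruction theorem (Theorem \ref{th:reconstruct}) for each Weil algebra $\bA$; you have additionally spelled out the functoriality check (that naturality of the $\ul\phi_{ij}$ makes $(i,x)\mapsto(i,V^\phi(x))$ descend to the quotients), which the paper leaves implicit. Nothing further is needed.
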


\begin{example}
All usual (finite-dimensional real) manifolds are Weil manifolds over $\K=\R$  (\cite{KMS}, Theorem 35.13),
and all smooth manifolds over non-discrete topological fields or rings $\K$ are
$\K$-Weil manifolds (\cite{BeS}, Theorem 1.2).
\end{example}

\begin{example}
For any commutative ring $\K$, and any $\K$-module $V$ admitting a direct sum decomposition
$V = E \oplus F$,
the {\em Grassmannian of type $E$ and co-type $F$}, $M=\Gras_E^F(V)$ (space of all
$\K$-submodules isomorphic to $E$ and having a complement isomorphic to $F$) gives rise
to a Weil manifold: the Weil space structure is given by $M^\bA = \Gras_{E^\bA}^{F^\bA}(V^\bA)$,
and the charts by the natural affine space structure on the set of complements of some fixed module.
In particular, the {\em projective space laws}
$\ul{\K \PP^n} := \ul{\Gras_\K^\K (\K^{n+1})}$
are $\K$-Weil manifolds. 

\ssk
This example has a vast generalization: every {\em Jordan geometry} associated to
a {\em Jordan pair over $\K$} carries a structure of $\K$-Weil manifold, and so does
any {\em associative geometry} associated to an {\em associative pair over $\K$} (\cite{Be13b}). 
\end{example}

\section{Weil varieties}\label{sec:Weilvar}

\subsection{Infinitesimal linearity of Weil varieties}
The fact that Weil manifolds admit local trivializations, in the sense of definition \ref{def:triv}, implies rather directly
that they are {\em infinitesimally linear}: the fibers of the tangent bundle $TM$ are $\K$-modules.
For instance, tensoring
 the exact sequence (\ref{eqn:WeilSeq}):  $\bA \odot \bB  \to \bA \otimes \bB \to \bA \times_\K \bB$
for a pair $(\bA,\bB)$ 
of Weil algebras with a $\K$-module $V$ gives the decomposition
\begin{equation}
V^{\bA \otimes \bB} = V^{\mbA \otimes \mbB} \oplus V^\mbA \oplus V^\mbB \oplus V ,
\end{equation}
showing that the following sequence is {\em exact over the base $V$}, in the sense that 
it is an exact sequence in the fiber over $V$:
 \begin{equation}\label{eqn:WeilSeq'}
\begin{matrix} 
V & \to & V^{\mbA \otimes_\K \mbB } \oplus V  &
\to & V^{\bA \otimes \bB} & \buildrel{p^{\bA,\bB}} \over \to &
V^\mbA \oplus V^\mbB \oplus V  & \to & V
\end{matrix}
\end{equation}
This is already the main ingredient in the proof of

\begin{theorem}\label{th:mfd-var}
Assume $\ul M$ is a Weil manifold over $\K$.
\begin{enumerate}
\item
For all pairs $(\bA,\bB)$ of Weil algebras,
 there is a natural isomorphism between $M^{\bA \times_\K \bB}$ and
 the {\em fiber product} of $M^\bA$ and $M^\bB$ over the base $M$:
$$
\boxed{M^{\bA \times_\K \bB} = M^\bA \times_M M^\bB} .
$$
Naturality means here, that for any $\K$-Weil  law $\ul f : \ul M \to \ul N$,
$$
 f^{\bA \times_\K \bB} =
f^\bA \times_M f^\bB . 
$$
\item
An exact  sequence of Weil algebras
$\hat \bI \to \bA \to \bA / \bI$  (cf.\ definition \ref{def:ideal})  induces
an {\em exact sequence of bundles over $M$}
$$
\boxed{
\begin{matrix} M & \to & M^{\hat \bI} & \to & M^\bA & \to &  M^{\bA / \bI} & \to & M \end{matrix} }
$$
 i.e., in each fiber over $M$ the induced maps form  an
exact sequence of pointed sets: the inverse image of $0$ under  the second map is the image of the first.
\end{enumerate}
\end{theorem}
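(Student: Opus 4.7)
The whole theorem is a fibered statement, so my plan is to reduce it to a pointwise (linear-algebra) statement on a single chart domain $\ul U\subset \ul V$ in the model $\K$-module $V$, where the trivialization $U^\bA = U\times V^\mbA$ makes everything explicit, and then glue the local isomorphisms via the atlas using naturality.

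First, I set up the canonical comparison maps. For (1), the projections $p_\bA, p_\bB$ from $\bA\times_\K \bB$ onto its factors are morphisms of Weil algebras, so functoriality of $\ul M$ gives maps $M^{\bA\times_\K \bB}\to M^\bA$ and $M^{\bA\times_\K \bB}\to M^\bB$ which agree after further projection to $M$; they induce the comparison $\Phi: M^{\bA\times_\K \bB}\to M^\bA\times_M M^\bB$. For (2), the inclusion $\hat\bI\to \bA$, the projection $\bA\to\bA/\bI$, and the zero sections $\K\to \hat\bI$, $\K\to\bA/\bI$ give, by functoriality, the four arrows in the claimed exact sequence.

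Next, I verify both statements on a chart domain $\ul U\subset \ul V$. For (1), using the canonical decomposition $\bA\times_\K \bB = \K\oplus \mbA\oplus \mbB$ from Definition 2.4, I compute
\[
U^{\bA\times_\K \bB} \;=\; U\times V\otimes_\K(\mbA\oplus\mbB) \;=\; U\times V^\mbA\times V^\mbB,
\]
while on the right $U^\bA\times_U U^\bB = (U\times V^\mbA)\times_U(U\times V^\mbB)=U\times V^\mbA\times V^\mbB$; inspection shows $\Phi$ is exactly this tautological identification. For (2), I pick a free complement $C$ of $\bI$ in $\mbA$, so $\mbA = \bI\oplus C$, $\hat\bI = \K\oplus \bI$, $\bA/\bI = \K\oplus C$. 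Tensoring with $V$ and restricting to the fiber over $x\in U$ yields the sequence of pointed sets
\[
\{0_x\}\;\hookrightarrow\; V\otimes \bI \;\hookrightarrow\; V\otimes \bI\oplus V\otimes C\;\twoheadrightarrow\; V\otimes C \;\twoheadrightarrow\;\{0_x\},
\]
which is plainly exact: the kernel of the middle surjection is exactly $V\otimes\bI$, the middle inclusion has trivial kernel, and the final surjection is onto the one-point fiber.

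Finally, I globalize via the atlas $\ul{\mathcal A}$. For each chart $\ul U_i$ and each pair $(\bA,\bB)$, the isomorphism $U_i^{\bA\times_\K \bB}\cong U_i^\bA\times_{U_i} U_i^\bB$ is natural in $\bA$ and $\bB$ and natural in $\ul U_i$, so it commutes with the transition Weil laws $\ul\phi_{ij}^{\bA\times_\K \bB}$ versus $\ul\phi_{ij}^\bA\times_M \ul\phi_{ij}^\bB$. Hence the chartwise bijections glue to a global bijection $\Phi$, and the same naturality argument applied to an arbitrary Weil law $\ul f:\ul M\to \ul N$ gives $f^{\bA\times_\K \bB}=f^\bA\times_M f^\bB$; likewise the local exact sequence in (2) patches to the global one. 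I expect the only non-cosmetic point is checking that the reduction ``via the atlas'' is legitimate for assertions phrased in terms of the fiber product $\times_M$ and of ``exact over the base'', which means verifying that these notions are local on $M$ for the atlas-topology --- a routine but necessary bookkeeping step that justifies the gluing.
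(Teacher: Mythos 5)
Your proposal is correct and follows essentially the same route as the paper: reduce to the explicit trivialization $U^\bA = U\times V^{\mbA}$ on chart domains, where both claims become the tautological decompositions $V\otimes(\mbA\oplus\mbB)=V^{\mbA}\times V^{\mbB}$ and $V\otimes(\bI\oplus C)$, and then glue via the atlas since the identifications are natural and hence commute with the transition laws. The only difference is one of explicitness --- you spell out the comparison maps and the gluing step that the paper leaves as ``immediate'' and ``of local nature''.
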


\begin{proof}
Assume first that  $\ul M = \ul V$
is given by a $\K$-module $V$.  Then, since tensor products over $\K$ are compatible with direct sum
decompositions over $\K$, we get from (\ref{eqn:WeilSeq}) the sequence (\ref{eqn:WeilSeq'}). 
Similarly, an exact sequence of Weil algebras as in def.\ \ref{def:ideal} (where
$\bA = \K \oplus \bI \oplus C$) gives rise by tensoring to
$$
V^\bA = V \otimes_\K (\K \oplus \bI \oplus C) =
V \oplus (V \otimes_\K \bI) \oplus (V \otimes_\K C) ,
$$
where $V \otimes_\K C = V_{\bA / I}$. 
From these identifications, both assertions (1) and (2) immediately follow.

If $\ul M$ is a general Weil manifold, then we apply the preceding arguments in each
domain $\ul V_{ij}$, and since the properties are of local (even: infinitesimal) nature, 
and chart changes induce isomorphisms on the set level, they hold also for $\ul M$. 
\end{proof}


\subsection{Weil varieties}
To formulate the general definition of Weil varieties,
we note that for any Weil space $\ul M$ and pair of Weil algebras $(\bA,\bB)$, there is a natural map
\begin{equation}
p^{\bA,\bB}: 
M^{\bA \times_\K \bB}\to M^\bA \times_M M^\bB 
\end{equation}
which, in the fiber over $x \in M$, for $u \in M^{\bA \times_\K \bB}$, is simply given by
\begin{equation}\label{eqn:FP2}
p^{\bA ,\bB} (u) = \bigl( p^\bA(u), p^\bB(u) \bigr) 
\end{equation}
where $p^\bA:M^{\bA \times_\K \bB} \to M^\bA$ 	and $p^\bB:M^{\bA \times_\K \bB} \to M^\bB$ are induced by
 $\bA \times_\K \bB \to \bA$ and $\bA \times_\K \bB \to \bB$.
This is a simple consequence of the very definition of the {\em fiber product of $M^\bA$ and $M^\bB$ over $M$}:
$
M^\bA \times_M M_{\bB} 
$
is defined to be
 the pushout in the category $\ul\set$, and it can be constructed as equalizer of the two projections
$\pi^\bA : M^\bA \to M$ and $\pi^\bB:M^\bB \to M$: 
\begin{equation}
M^\bA \times_M M_{\bB}  = \{ (u,v) \in M^\bA \times M^\bB \mid \, \pi^\bA (u) = \pi^\bB(v) \} ,
\end{equation}
and the fiber over $x \in M$ then is the direct product $T_x^\bA M \times T_x^\bB M$.
The map $p^{\bA,\bB}$ then exists by the universal property, and is explicitly given by (\ref{eqn:FP2}).

\begin{definition}
 A {\em Weil variety} is  a Weil space $\ul M$ such that, for each pair $(\bA,\bB)$ of Weil algebras,
 resp.\ for each Weil ideal $\bI$ of $\bA$,
 \begin{enumerate}
 \item
  the natural map $p^{\bA,\bB}$ is a bijection, 
  \item
the  exact  algebra sequence  $\hat \bI \to \bA \to \bA / \bI$ induces an exact bundle sequence.
\end{enumerate}
The bundle $M^{\mbA \odot \mbB} := M^{\mbA \oplus \mbB \oplus \K}$ over $M$ is called the
{\em verticle bundle with respect to $(\bA,\bB)$}.
Combining (1) and (2), we get an exact sequence over $M$
\begin{equation} \label{eqn:WeilSeq3}
\begin{matrix}
M^{\bA \odot \bB} & \to & M^{\bA \otimes_\K \bB} & \to & M^{\bA} \times_M M^\bB .
\end{matrix}
\end{equation}
 If
$\bA = \bB = T\K$, then there is is a canonical  isomorphism of Weil algebras,
$$
T\K \to T\K \odot T\K = \K \oplus (\eps_1 \K \otimes_\K \eps_2 \K) = \K \oplus \eps_1 \eps \K, \qquad 
(x + \eps v) \mapsto x +\eps_1 \eps_2 v,
$$
 whose inverse we denote by $\nu$.
It induces an isomorphism 
\begin{equation}\label{eqn:nu}
\nu : M^{T\K \odot T\K} \to TM ,
\end{equation}
between $TM$ and ``the'' vertical bundle,
and 
 we have the exact  sequence
\begin{equation}\label{eqn:tangentexact}
\begin{matrix}
M & \to & TM & \to & TTM & \to & TM \times_M TM & \to & M .
\end{matrix}
\end{equation}
\end{definition} 

\begin{remark}
From the algebra isomorphism 
$\bA \otimes (\mbB \oplus \mbB' \oplus \K) =
(\bA \otimes \mbB)  \oplus( \bA \otimes \mbB') \oplus \bA$, we get the following 
``distributivity isomorphism'' of bundles over $T^\bA M$
\begin{equation}
T^\bA (T^\bB M \times_M T^{\bB'} M) \cong T^\bA T^\bB M \times_{T^\bA M} T^\bA T^{\bB'} M  .
\end{equation}
\end{remark}

\subsection{Linear and affine  bundles induced by vector algebras and  ideals}

\begin{theorem} \label{th:WeilLinear} 
Let $\bA$ be  a vector algebra over $\K$ and  $\ul M$  a $\K$-Weil variety.
\begin{enumerate}
\item
$T^\bA M$ is a {\em linear bundle over $M$}, that is, the fibers $T^\bA_x M$ carry a natural
$\K$-module structure with origin $0_x$, and
if $\ul f : \ul M \to \ul N$ is a Weil law between  $\K$-Weil varieties, then 
tangent maps $T_x^\bA f : T^\bA_x M \to T^\bA_{f(x)} N$ are $\K$-linear. 
\item
The ``usual''
tangent bundle $TM$ and tangent maps $Tf$ are linear in fibers. 
\item
When $\ul M = \ul V$ is a $\K$-module, the linear structure is the one given by the trivialization:
in each fiber it coincides with the one coming from the
$\K$-module $V_\mbA = V \otimes_\K \mbA$, and if $\ul M$ is a Weil manifold, then
the linear structure is obtained from the linear structure underlying chart domains, for any chart.
\end{enumerate}
\end{theorem}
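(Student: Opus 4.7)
The plan is to transport the $\K$-module structure of the vector algebra $\bA$ into each fiber of $T^\bA M$, using Lemma~\ref{affine-lemma}(3) together with the defining property of a Weil variety. Since $\bA$ is a vector algebra, that lemma provides morphisms of bundle algebras $\alpha : \bA \times_\K \bA \to \bA$ (addition), $\rho_r : \bA \to \bA$ for each $r \in \K$ (scalar multiplication), and the zero section $\zeta : \K \to \bA$, all fibered over $\K$. Applying the functor $\ul M$ and using the identification $M^{\bA \times_\K \bA} \cong M^\bA \times_M M^\bA$ built into the Weil variety axioms, I obtain fibered operations
\[
M^\alpha : M^\bA \times_M M^\bA \to M^\bA , \quad M^{\rho_r} : M^\bA \to M^\bA , \quad M^\zeta : M \to M^\bA ,
\]
giving each fiber $T^\bA_x M$ a candidate $\K$-module structure with origin $0_x = M^\zeta(x)$.

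Next I would verify the module axioms. Each axiom (commutativity and associativity of $\alpha$, $\rho_r\circ\rho_s=\rho_{rs}$, $\rho_1=\id$, the two distributivity laws, and neutrality of the zero) is encoded as a commutative diagram of bundle algebra morphisms involving $\alpha, \rho_r, \zeta$ and iterated fiber products $\bA \times_\K \cdots \times_\K \bA$. The Weil variety property extends inductively to such higher fiber products, so $\ul M$ sends these diagrams to commutative diagrams of bundle maps over $M$; restricting to the fiber over $x$ yields the $\K$-module axioms on $T^\bA_x M$. The fiberwise $\K$-linearity of $T_x^\bA f$ then comes directly from the naturality of the Weil law $\ul f$: the commutative squares expressing naturality with respect to $\alpha$ (resp.\ $\rho_r$, $\zeta$) say exactly that $f^\bA$ is additive (resp.\ homogeneous, origin-preserving) in fibers.

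Parts (2) and (3) are almost immediate. For (2), $T\K$ is a vector algebra, so (1) applies. For (3), when $\ul M = \ul V$ is a $\K$-module the canonical trivialization $V^\bA = V \oplus (V \otimes_\K \mbA)$ makes the operations $V^\alpha$ and $V^{\rho_r}$ agree by direct inspection with addition and scalar multiplication on $V \otimes_\K \mbA$; for a Weil manifold the same identification holds in each chart, and since the transition laws are Weil isomorphisms they intertwine the fiber structures, so the linear structure descends globally and matches the one inherited from any chart. The main obstacle I expect is the coherence step: verifying that the natural isomorphism $M^{\bA \times_\K \bA \times_\K \bA} \cong M^\bA \times_M M^\bA \times_M M^\bA$ is compatible with both associativity groupings, so that the associativity and distributivity diagrams really translate faithfully under $\ul M$. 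Once this coherence is in hand, the remainder is a routine diagram chase driven by functoriality.
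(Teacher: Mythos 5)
Your proof is correct and follows essentially the same route as the paper: both transport the structure maps $\alpha$, $\rho_r$, $\zeta$ of the vector algebra (Lemma~\ref{affine-lemma}) through the functor $\ul M$, use the Weil variety identification $M^{\bA\times_\K\bA}\cong M^\bA\times_M M^\bA$ to make addition well defined, verify the module axioms diagrammatically, and deduce fiberwise linearity of $T^\bA_x f$ from naturality. The coherence point you flag for iterated fiber products is indeed the only step needing care, and it is handled exactly as you suggest (the paper treats it as routine).
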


\begin{proof}
(1)
The structure maps
$a_M: M^\bA \times M^\bA \to M^\bA$ (fiberwise addition) and 
$ \K \times M^\bA \to M^\bA$ (fiberwise multiplication by scalars) are induced from the corresponding
canonical morphisms of vector algebras,
 $\alpha : \bA \times_\bK \bA \to \bA$, $(x,a,b) \mapsto (x,a+b)$ 
and $\rho_r : \bA \to \bA$, $(x,v) \mapsto (x,rv)$ (Lemma \ref{la:sum}): 
while $\rho_r$ induces morphisms on any Weil space, for $a_M$ we need that $\ul M$ is a Weil variety, to get 
\begin{equation}\label{eqn:add}
a= M^\alpha : M^{\bA} \times_M M^\bA  \cong M^{\bA \times_\K \bA} \to M^{\bA} .
\end{equation}
Once these structure maps are well-defined, it follows by purely ``diagrammatic'' arguments that
they define a linear bundle structure on $M^\bA$:
associativity and commutativity of $\mbA$ are expressed by commutative diagrams involving $\alpha$
and diagonal imbeddings; by functoriality, these
translate to diagrams for $a$, thus we get commutative associative products $a_x$ on the fibers $(M^\bA)_x$.
The neutral element of $a_x$ is given by the origin $0_x$: 
this is again a diagrammatic argument, following from 
$\id_\bA = \alpha \circ (\id \times_\K \zeta) = \alpha \circ (\zeta \times_\K \id)$
which holds on the level of Weil algebras.
Thus we get a monoid, and we write $a_x(u,v)=:u+v$.
Concerning inverses in this monoid,
 the map $(-1)_\bA := \rho_{-1} : \bA \to \bA$, $(a,r) \mapsto (-a,r)$ (Lemma \ref{la:sum}) is an automorphism of $\bA$ and
hence induces an Weil automorphism law $(-1)_{M^\bA}$ of $M^\bA$, which in the fiber over $x$ 
is nothing but the inversion map needed in order to get the group structure $(T_x^\bA M,+)$.
Now it follows by the same kind of arguments that the maps $\rho_r$ induce a scalar action on
$(M^\bA,a)$. 
Summing up, we have defined a $\K$-module structure in each fiber over $M^\bA$.
These definitions are natural (defined in terms of structure data of the Weil algebras), 
hence commute with natural transformations $\ul f$; and this means that $f^\bA$ is linear in fibers.

\ssk
(2) is the special case of the vector algebra $\bA = T\K =\K[X]/(X^2)$.

\ssk
(3) For $\ul M=\ul V$, the map $a$ from (\ref{eqn:add}) is nothing but vector addition in
$V_\mbA$, since $\alpha$ is addition in $\mbA$. Similarly, the scalar action is the usual one,
since it is induced from the usual scalar action of $\K$ on $\mbA$.
\end{proof}

\begin{theorem}\label{th:vectorideal}
Assume $\bI$ is a vector ideal in a Weil algebra $\bA$.
Then $M^\bA$ is an {\em affine bundle over the base $M^{\bA / \bI}$, modelled on the
linear bundle $M^{\hat \bI}$}.
More precisely, there is a natural {\em action morphism of Weil varieties}
$$
M^{\hat \bI} \times_M M^\bA \to M^\bA
$$
such that, for $x \in M$ fixed, the total space $E=(M^\bA)_x$ is a principal bundle over
the base $E/V=(M^{\bA / \bI})_x$, with structure group the additive group $V=(M_{\hat \bI})_x$
acting freely on $E$.
\end{theorem}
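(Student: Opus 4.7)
First I would apply the functor $\ul M$ to the bundle algebra morphism $\beta : \hat\bI \times_\K \bA \to \bA$ of Lemma \ref{affine-lemma}(4). Since $\ul M$ is a Weil variety, the defining property identifies $M^{\hat\bI \times_\K \bA}$ canonically with $M^{\hat\bI} \times_M M^\bA$, so $M^\beta$ yields a Weil law
\[
a : M^{\hat\bI} \times_M M^\bA \longrightarrow M^\bA.
\]
The statement that $\beta$ lies over $\bA/\bI$ transfers, by functoriality, to the statement that $a$ is a morphism over $M^{\bA/\bI}$; in particular, the action preserves fibres of the projection $\pi : M^\bA \to M^{\bA/\bI}$. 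Because $\bI$ is a vector ideal, $\bI \cdot \bI \subset \bI \cdot \mbA = 0$, so $\hat\bI$ is itself a vector algebra, and Theorem \ref{th:WeilLinear} endows $M^{\hat\bI}$ with the structure of a $\K$-linear bundle over $M$. The action axioms for $a$ then follow by functoriality from corresponding identities at the algebra level: $\beta \circ (\zeta_{\hat\bI} \times \id_\bA) = \id_\bA$ gives the identity axiom, and the algebraic identity $(i + j) + u = i + (j + u)$ in $\bA$ gives compatibility of $a$ with the group law on $M^{\hat\bI}$, along exactly the pattern used in the proof of Theorem \ref{th:WeilLinear}.

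The crux is freeness and transitivity on fibres of $\pi$. Here I would exploit the $\K$-algebra isomorphism
\[
\phi : \hat\bI \times_\K \bA \xrightarrow{\;\sim\;} \bA \times_{\bA/\bI} \bA, \qquad (x, i, u) \longmapsto \bigl((x, u), (x, u + i)\bigr),
\]
whose inverse sends $((x, u), (x, u + i))$ back to $(x, i, u)$. Multiplicativity of $\phi$ uses the vector ideal property $\bI \cdot \mbA = 0$ in an essential way: all cross-terms of the form $i \cdot u'$ and $i \cdot i'$ vanish, so the two sides of the product compute to the same element. Applying $\ul M$ and using the Weil variety identification $M^{\hat\bI \times_\K \bA} = M^{\hat\bI} \times_M M^\bA$, the map $M^\phi$ becomes the orbit-parametrising map
\[
\Psi : M^{\hat\bI} \times_M M^\bA \longrightarrow M^\bA \times_{M^{\bA/\bI}} M^\bA, \qquad (i, u) \longmapsto \bigl(u, a(i, u)\bigr),
\]
whose bijectivity is equivalent to freeness and transitivity of the action on each fibre of $\pi$.

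The main obstacle is establishing bijectivity of $\Psi$: this is not quite formal from the Weil variety axioms (which only give pushouts over $\K$), and one has to identify $M^{\bA \times_{\bA/\bI} \bA}$ with the set-theoretic fibre product $M^\bA \times_{M^{\bA/\bI}} M^\bA$, an extension of the Weil variety property over the quotient ring $\bA/\bI$. I would handle this by combining the Weil variety axioms with the exact bundle sequence induced by $\hat\bI \to \bA \to \bA/\bI$, possibly after splitting $\bA = \K \oplus \bI \oplus C$ for a free complement $C$ of $\bI$ in $\mbA$ (cf.\ Definition \ref{def:ideal}), so that the module decomposition makes the two fibre products manifestly isomorphic in each fibre over $M$. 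Once $\Psi$ is a bijection, the theorem follows immediately: for each $x \in M$, the fibre $E = (M^\bA)_x$ projects onto $E/V = (M^{\bA/\bI})_x$, the additive group $V = (M^{\hat\bI})_x$ acts freely on $E$, and every fibre of this projection is a single $V$-orbit.
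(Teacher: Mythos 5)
Your construction of the action is exactly the paper's: apply $\ul M$ to the morphism $\beta$ of Lemma \ref{affine-lemma}(4), use axiom (1) of Weil varieties for the pair $(\hat\bI,\bA)$ to read $M^\beta$ as a law $M^{\hat\bI}\times_M M^\bA\to M^\bA$, note that $\bI\cdot\mbA=0$ forces $\bI^2=0$ so that $\hat\bI$ is a vector algebra and Theorem \ref{th:WeilLinear} makes $M^{\hat\bI}$ a linear bundle, and derive the action identities diagrammatically from the algebra identities $\beta\circ(\zeta\times_\K\id)=\id_\bA$ and $\beta\circ(\id\times_\K\beta)=\beta\circ(\alpha\times_\K\id)$. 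This is precisely what the paper means by ``follows the lines of Theorem \ref{th:WeilLinear}''. Your isomorphism $\phi:\hat\bI\times_\K\bA\to\bA\times_{\bA/\bI}\bA$ is correct (multiplicativity does hinge on the vanishing of $i\cdot u'$, $i'\cdot u$ and $i\cdot i'$) and is a genuinely clarifying addition: it packages ``free and simply transitive on each fibre of $\pi$'' into the single assertion that $\ul M$ sends the algebra pullback $\bA\times_{\bA/\bI}\bA$ to the set-theoretic pullback $M^\bA\times_{M^{\bA/\bI}}M^\bA$.

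The step you flag as the crux is indeed the only non-formal point, and you are right that it does not follow from the two axioms defining a Weil variety: axiom (1) concerns only pullbacks over $\K$, and axiom (2) (exactness of pointed sets in each fibre) controls only the fibre of $M^\bA\to M^{\bA/\bI}$ over the zero section, giving neither freeness nor transitivity over a general point. Your proposed repair via the splitting $\mbA=\bI\oplus C$ does not close it, because $\K\oplus C$ is in general not a subalgebra of $\bA$ (take $\bA=TT\K$ and $\bI=\eps_1\eps_2\K$, the case of the Corollary that follows), so one cannot reduce to axiom (1). What is true is that the required preservation of $\bA\times_{\bA/\bI}\bA$ holds for $\ul M=\ul V$ by the tensor-product computation in the proof of Theorem \ref{th:mfd-var}, hence for Weil manifolds by chart arguments; for an abstract Weil variety it is an instance of the ``microlinearity'' conditions of synthetic differential geometry and should be regarded as an extra hypothesis or as implicit in the intended definition. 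Note that the paper's own proof (``induces a morphism \dots\ which has the required properties'') is silent on exactly this point, so your write-up is, if anything, more explicit about where the real content lies.
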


\begin{proof}
The proof follows exactly the lines of the one of Theorem \ref{th:WeilLinear}:
the  corresponding action morphism on the level of Weil algebras is given by Lemma
\ref{affine-lemma}; it is
compatible with the given data and hence induces a morphism on the level of varieties, which
has the required properties. 
\end{proof}

\nin 
If, with notation from the theorem,
for $u,v \in E$ belonging to the same $V$-orbit, we denote by $u-v \in V$ the unique element
$g \in V$ such that $g.v=u$,  we get a
``difference morphism''
\begin{equation}\label{eqn:difference}
M^\bA \times_{M^{\bA / \bI}} M^\bA \to  M^{\hat \bI}, \quad (u,v) \mapsto u-v .
\end{equation}
If $\ul M = \ul V$ for a $\K$-module $V$, then
$V^\bA = V_\bI \oplus V_C \oplus V$, and
$(u,y,x), (u',y',x')$ belong to the same fiber over $V_{\bA/\bI} = V_C \oplus V$ iff
$x=x'$ and $y=y'$, and then the difference $u-v$ we have defined by (\ref{eqn:difference}) coincides with the 
difference $u-v$ in the module $V_\bI$.
Applying Theorem  \ref{th:vectorideal} to the sequence (\ref{eqn:WeilSeq}) of $\bA \otimes \bB$, we get, if both 
$\bA$ and $\bB$ are vector algebras
(so the vertical ideal $\mbA \otimes \mbB$ is a vector ideal in $\bA \otimes \bB$):

\begin{corollary}
Assume $(\bA,\bB)$ is a pair of vector algebras over $\K$, and $\ul M$ a Weil variety.
Then the bundle
$M^{\bA \otimes \bB}$ is an {\em affine bundle over $M^\bA \times_M M^\bB$, modelled on
the vertical bundle $M^{\bA \odot \bB}$}.
\end{corollary}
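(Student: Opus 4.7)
The plan is to reduce the Corollary to a direct application of Theorem \ref{th:vectorideal}, by identifying the exact sequence (\ref{eqn:WeilSeq}) of $\bA \otimes \bB$ as the one associated to a vector ideal. Concretely, I would take $\bI := \mbA \otimes_\K \mbB \subset \bA \otimes \bB$, which is a free and finitely generated $\K$-submodule of the augmentation ideal admitting the complement $\mbA \oplus \mbB$. So it is an ideal in the sense of Definition \ref{def:ideal}, and $\hat\bI = \K \oplus (\mbA \otimes \mbB) = \bA \odot \bB$ by the definition given there, while $(\bA \otimes \bB)/\bI$ is canonically isomorphic to the Whitney sum $\bA \times_\K \bB$ (this is visible on the level of $\K$-module decompositions and products given in the pushout/pullback formulas of Section \ref{sec:Walg}).

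The key verification is that $\bI$ is a \emph{vector} ideal in the sense of Definition \ref{def:ideal} when $\bA$ and $\bB$ are both vector algebras. For this I would just compute: any element of the augmentation ideal of $\bA \otimes \bB$ is a sum of the form $a \otimes 1 + 1 \otimes b + \sum_k u_k \otimes v_k$ with $a \in \mbA$, $b \in \mbB$, $u_k \in \mbA$, $v_k \in \mbB$; multiplied with $u' \otimes v' \in \bI$ we get $au' \otimes v' + u' \otimes bv' + \sum_k u_k u' \otimes v_k v'$, and each summand vanishes because $\mbA^2 = 0$ and $\mbB^2 = 0$. Hence $\bI$ annihilates the augmentation ideal, and in particular is a vector ideal.

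Once this is in place, Theorem \ref{th:vectorideal} applied to $\bI \subset \bA \otimes \bB$ yields an action morphism $M^{\hat\bI} \times_{M^{(\bA \otimes \bB)/\bI}} M^{\bA \otimes \bB} \to M^{\bA \otimes \bB}$ realizing $M^{\bA \otimes \bB}$ as an affine bundle over $M^{(\bA \otimes \bB)/\bI}$ modelled on $M^{\hat\bI}$. Substituting $\hat \bI = \bA \odot \bB$ gives the vertical bundle, and substituting $(\bA \otimes \bB)/\bI = \bA \times_\K \bB$ together with the Weil variety identification $M^{\bA \times_\K \bB} \cong M^\bA \times_M M^\bB$ (item (1) of the definition of Weil variety, compatible with the exact sequence (\ref{eqn:WeilSeq3})) gives the stated base. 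There is essentially no main obstacle here: the entire argument is a matter of reading off the relevant algebraic identifications and invoking Theorems \ref{th:mfd-var} and \ref{th:vectorideal}; the only genuine computation is the short check that $\mbA \otimes \mbB$ is annihilated by the augmentation ideal, which is immediate from $\mbA^2 = \mbB^2 = 0$.
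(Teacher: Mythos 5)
Your proposal is correct and follows exactly the paper's own route: the corollary is stated as an application of Theorem \ref{th:vectorideal} to the vertical ideal $\mbA \otimes_\K \mbB$ of $\bA \otimes \bB$, which is a vector ideal precisely because $\mbA^2 = 0 = \mbB^2$. You have merely spelled out the verification that the paper leaves implicit.
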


\nin  In order to fix notation,
let us consider the  case $\bA = \K[\eps_1]= T\K$, 
$\bB= \K[\eps_2]=T\K$ and $\bA \otimes_\K \bB = \K[\eps_1,\eps_2] = TT\K$.
On the level of Weil algebras, we have the following diagrams of injections (zero sections) and
surjections (projections):
\[
\begin{matrix}
 & & TT\K & & \cr
& \buildrel {\zeta_1 \quad} \over \nearrow & & \buildrel{\quad \zeta_2} \over \nwarrow & \cr
T\K & \rightarrow & T\K \times_\K T\K & \leftarrow & T\K \, , \cr
&\nwarrow &  \uparrow & \nearrow & \cr
& & \K & & 
\end{matrix}
\qquad \quad 
\begin{matrix}
 & & TT\K & & \cr
& \buildrel {p_1 \quad} \over \swarrow & \phantom{p_{12}}\downarrow { }^{p_{12}} & \buildrel{\quad p_2} \over \searrow & \cr
T \K& \leftarrow & T\K \times_\K T\K  & \rightarrow & T\K \cr
&\searrow &  \downarrow & \swarrow & \cr
& & \K & & 
\end{matrix}
\]
Note that
there is no ``zero section $\zeta_{12}$'' (the inclusion map is not a morphism of Weil algebras; it is precisely at this point
that {\em connections} come into play, cf.\ \cite{Be08, So12}). 
On the level of second tangent bundles, this gives rise to diagrams
\[
\begin{matrix}
 & & TTM & & \cr
& \buildrel {\zeta_1 \quad} \over \nearrow & & \buildrel{\quad \zeta_2} \over \nwarrow & \cr
TM & \rightarrow & TM \times_M TM & \leftarrow & TM \, , \cr
&\nwarrow &  \uparrow & \nearrow & \cr
& & M & & 
\end{matrix}
\qquad 
\begin{matrix}
 & & TTM & & \cr
& \buildrel {p_1 \quad} \over \swarrow & \phantom{p_{12}}\downarrow { }^{p_{12}} & \buildrel{\quad p_2} \over \searrow & \cr
T M& \leftarrow & TM \times_M TM & \rightarrow & TM \cr
&\searrow &  \downarrow & \swarrow & \cr
& & M & & 
\end{matrix}
\]
where $p_{12}$ is an affine bundle with translation bundle $TM$ acting on $TTM$. 
Moreover, on each bundle $M^{\bA \otimes \bA}$, hence also on $TTM$,  there is a {\em canonical flip} 
\begin{equation}\label{eqn:flip}
\tau : TTM \to TTM
\end{equation}
 induced by the flip of $TT\K$ exchanging
$\eps_1$ and $\eps_2$ (Lemma \ref{affine-lemma}, Part (2)).

\begin{remark}\label{rk:properties}
The category of Weil varieties is {\em cartesian closed} (see Remark  \ref{rk:cclosed}), and  the 
zero set $\ul{Z(S)} $ (Section \ref{subsec:Zf}) in a Weil variety is again a Weil variety; in particular affine algebraic Weil spaces are
Weil varieties (details will  be given elsewhere).
\end{remark}

\section{Weil Lie groups}
\label{sec:WeilLie}


\subsection{Definition and examples}

\begin{definition}
A {\em Weil Lie group} is a group object in the category of Weil varieties: it is given by 
 $(\ul G, \ul m, \ul i, \ul e)$ where $\ul G$ is a Weil variety, and Weil laws
$\ul m : \ul G \times \ul G \to \ul G$ and $\ul i:\ul G \to \ul G$ and
$\ul e: \ul 0 \to \ul G$ (so $e^\bA \in G^\bA$ is a distinguished element)
 such that for each Weil algebra $\bA$,
 we have a group
$(G^\bA,m^\bA,i^\bA,e^\bA)$.

\ssk
A  {\em law of Weil Lie groups} is a natural transformation $\ul f$
between two Weil Lie groups $\ul G,\ul H$: 
thus $f^\bA:G^\bA \to H^\bA$ is a group homomorphism, for
all $\bA$.
We say that $\ul G$ is a Weil-Lie goup {\em with atlas} if it is also a Weil manifold such
that the group laws are morphisms of Weil manifolds. 
\end{definition}

\begin{example}
\begin{enumerate}
\item
 A usual (real, or complex) analytic Lie group $G$ is a Weil-Lie group, where
$G^\bA$ is the Weil bundle over $G$, which is a Lie group (\cite{KMS});
and this holds more generally for a smooth Lie group over an arbitrary topological field
(\cite{BeS}).
\item
With $\K=\Z$, $\ul{\GL(n,\Z)}$, $\ul{\SL(n,\Z)}$,$\ul{\OO(n,\Z)}$, etc., 
are the Weil Lie groups over $\Z$ assigning to a
$\Z$-Weil algebra $\bA$ the groups $\GL(n,\bA)$, $\SL(n,\bA)$, resp.\ $\OO(n,\bA)$.  Note that
$\GL(n,\Z)$ has a natural atlas (modelled on $M(n,n;\Z)$), and similarly for $\OO(n,\Z)$ (having the 
{\em Cayley rational chart},
modelled on ${\rm Skew}(n,\Z)$),
but for $\SL(n,\Z)$ it is more difficult to define an atlas modelled on the space of integer matrices of trace zero.
\item
%
For any $\K$-module $V$,  $\Gl_\K(V)$ is a Lie group (modelled on the module
$\End_\K(V)$). More generally, for any unital associative $\K$-algebra ${\mathcal A}$, the functor of
invertible elements ${\mathcal A}^\times$ defines a Weil Lie group, modelled on $\mathcal A$.
\item
The Weil laws from a Weil space $\ul M$ into some fixed Weil Lie group $\ul G$ form
again a Weil Lie group with ``pointwise'' multiplication  (``loop group'').
\item
Let $\g$ be any algebra over $\K$ (Lie, Jordan, or other). Then the $\K$-algebra automorphisms give rise to a Weil variety (associating $\Aut_\bA(\g^\bA)$ to $\bA$), and
hence to  a Weil Lie group $\ul \Aut_\K(\g)$. 
In finite dimension over $\R$, there is an atlas given by the exponential map, but in general 
there is no such atlas.
\item
Let $\K = {\mathbb Q}$ 
and $\g$ be a Lie algebra over $\K$. For each Weil algebra $\bA = \K \oplus \mbA$
let $G^\bA := \g \otimes_\K \mbA$ be equipped with its Baker-Campbell-Hausdorff multiplication:
$X \cdot_\bA Y = X+Y + \frac{[X,Y]}{2} + ...$ (finite sum since $\mbA$ is nilpotent).
Then $\ul G$ is a Weil Lie group (this can be seen as an infinitesimal version of Lie's Third Theorem).
Note that here $G = G_\K$ is just a point.
\end{enumerate}
\end{example}

\subsection{First approximation: vector addition}
If $\ul G$ is a $\K$-Weil Lie group, then for each Weil algebra $\bA$,
$(\ul{G^\bA},\ul{m^\bA})$ is a Weil Lie group (defined over $\bA$);
it associates to a Weil algebra $\bB$ the group $G^{\bA \otimes \bB}$.
For instance, the
{\em tangent group} $\ul{TG} = \ul G^{T\K}$ is again a Weil Lie group, and there is an
exact sequence of Weil Lie groups
\begin{equation}\label{eqn:LieSequence}
\begin{matrix}
0 & \to & \ul{T_e G} & \to & \ul{TG} & \to & \ul G & \to & 1 
\end{matrix}
\end{equation}
where $\ul{T_e G}$ is the functor associating to a Weil algebra $\bA$ the group
$(G^{T\bA})_e$, fiber of $G_{T \bA}$ over $e \in G$. 
This sequence is split with section $\ul \zeta:\ul G \to \ul{TG}$ induced by
the inclusion $\K \subset T\K$. 

\begin{theorem}\label{th:Lie1}
The kernel $\ul \g := \ul{T_e G}$ appearing in the sequence (\ref{eqn:LieSequence}) 
is an abelian Weil Lie group, and its group law coincides with the additive law of the
tangent space of $\ul G$ at $\ul e$.
The group $G$ acts on by conjugation on $\g$ is via linear maps; this representation 
is called the {\em adjoint representation}
$$
\Ad : G \to \Gl_\K (\g) , \quad g \mapsto \Ad(g) = (X \mapsto gXg\inv) .
$$
The group $TG$ is a semidirect product $G \ltimes \g$ via $\Ad$.
\end{theorem}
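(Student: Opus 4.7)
The strategy is the classical Eckmann--Hilton argument for parts~(1) and~(3), together with the general fact that tangent maps between Weil varieties are fibrewise linear for the linear structure coming from vector algebras (Theorem~\ref{th:WeilLinear}). Throughout, all three claims must be verified functorially in a test Weil algebra $\bA$, since $\ul \g$, $\ul G$ and $\ul{TG}$ are Weil spaces, not mere sets.

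For the abelianness of $\ul \g$ and the coincidence of group law with tangent addition, fix $\bA \in \ul\Walg_\K$ and work in $\g^\bA = (G^{T\bA})_{e^\bA}$. This set carries two binary operations: the group multiplication $\cdot$ inherited from $G^{T\bA}$, and the fibrewise addition $+$ obtained by scalar extension. For $+$, view $\ul G$ as an $\bA$-Weil variety via $T^{\bA,\K}$; since $T\bA = \bA \oplus \eps\bA$ is a vector $\bA$-algebra, Theorem~\ref{th:WeilLinear} applied over $\bA$ endows each fibre of $G^{T\bA}\to G^\bA$, hence in particular $\g^\bA$, with a $\K$-module structure (in fact an $\bA$-module structure), whose unit is $0_{e^\bA}$. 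One checks that $0_{e^\bA}$ equals $e^{T\bA}$ by applying naturality of $\ul e$ to the zero section $\bA \to T\bA$, so both operations share the common unit. The interchange law $(X_1 + X_2)\cdot(Y_1+Y_2) = (X_1\cdot Y_1) + (X_2\cdot Y_2)$ follows from naturality of $\ul m$ applied to the $\bA$-algebra morphism $\alpha: T\bA \times_\bA T\bA \to T\bA$ of Lemma~\ref{la:sum}(3): using the Weil-variety identification $G^{T\bA\times_\bA T\bA} \cong G^{T\bA} \times_{G^\bA} G^{T\bA}$ and the fact that group multiplication on the left is componentwise (because the two projections $p_1,p_2: T\bA\times_\bA T\bA \to T\bA$ are algebra morphisms), the interchange square commutes. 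The standard Eckmann--Hilton lemma then forces $\cdot = +$ and commutativity, and these identifications are natural in $\bA$.

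For the adjoint representation, the Weil law $\ul{\Int}: \ul G \times \ul G \to \ul G$, $(g,h)\mapsto ghg\inv$, is built from $\ul m$ and $\ul i$. For $g\in G$, the law $h \mapsto ghg\inv$ is a Weil automorphism of $\ul G$ fixing $\ul e$ (in the Weil sense, via the zero section $\zeta$); applying the tangent functor yields a group automorphism $T\Int(g): TG \to TG$ preserving the kernel $\g$. By Theorem~\ref{th:WeilLinear}(1), the induced map $\Ad(g) := T_e\Int(g)$ is $\K$-linear on $\g$, so $\Ad:\ul G \to \ul{\Gl_\K(\g)}$ is a well-defined Weil law (naturality in $\bA$ is immediate from functoriality of $T$).

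For the semidirect product decomposition, the exact sequence (\ref{eqn:LieSequence}) is split by the zero section $\zeta:\ul G\to \ul{TG}$ coming from the injection $\K\hookrightarrow T\K$; this is a law of Weil Lie groups because $\zeta$ respects multiplication and unit. Hence $\ul{TG} \cong \zeta(\ul G) \ltimes \ul\g$ as Weil Lie groups, with $\zeta(\ul G)$ acting on $\ul\g$ by restriction of inner automorphisms of $\ul{TG}$. Finally, this action is identified with $\Ad$ because, for $g\in G$ and $X\in \g$, computing $\zeta(g)\cdot X \cdot \zeta(g)\inv$ in $TG = G^{T\K}$ coincides, by naturality of $\ul\Int$ applied to $\zeta$, with $T\Int(g)(X) = \Ad(g)X$. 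The same identification holds verbatim on each $G^{T\bA}$, giving the Weil-law identity $\ul{TG} = \ul G \ltimes_{\ul\Ad} \ul\g$.

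The main technical obstacle is the Eckmann--Hilton step: one must carefully align the Weil-variety fibre-product identification $G^{T\bA \times_\bA T\bA} \cong G^{T\bA}\times_{G^\bA} G^{T\bA}$ (transitivity of scalar extension combined with Theorem~\ref{th:mfd-var}(1) over $\bA$) with the naturality square for $\ul m$ applied to $\alpha$. Once that identification is in place, the interchange law, commutativity, and coincidence of $+$ and $\cdot$ drop out formally and uniformly in $\bA$.
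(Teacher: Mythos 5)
Your proposal is correct, and its second and third parts (linearity of $\Ad(g)$ as the tangent map of conjugation at $e$, and the semidirect product coming from the splitting $\ul\zeta$ of the exact sequence) coincide with the paper's argument. For the first and main claim you take a genuinely different route. The paper argues in one line: by Theorem \ref{th:WeilLinear} the first differential $T_{(e,e)}m \colon T_eG\times T_eG\to T_eG$ is linear, and since $m(\cdot,e)=\id=m(e,\cdot)$ the partial tangent maps are identities, so $X\cdot Y=T_{(e,e)}m(X,0_e)+T_{(e,e)}m(0_e,Y)=X+Y$, which gives both abelianness and the identification with the additive law. You instead run the Eckmann--Hilton argument, deriving the interchange law from naturality of $\ul m$ with respect to the vector-algebra addition morphism $\alpha$ of Lemma \ref{la:sum}. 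Both are sound. The paper's computation is shorter and stays within the linear structure already established; your version is more diagrammatic and makes the functoriality in the test algebra $\bA$ explicit, but it pays for this with the identification $G^{T\bA\times_\bA T\bA}\cong G^{T\bA}\times_{G^\bA}G^{T\bA}$, which you rightly flag as the technical crux: it requires either that scalar extension $T^{\bA,\K}$ carries $\K$-Weil varieties to $\bA$-Weil varieties (true, but not proved in the paper), or a reduction to the $\K$-level via $T\bA\times_\bA T\bA=\bA\otimes_\K(T\K\times_\K T\K)$ together with the fiber-product axiom for $T^\bA\ul G$. Once that identification is supplied, your proof is complete and arguably better adapted to establishing the statement as an identity of Weil laws rather than of underlying sets.
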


\begin{proof}
The first differential $T_{(e,e)} m$ is linear,  so the group law is, for
$X,Y \in \g$,
$$
T_{(e,e)} m (X,Y) =
T_{(e,e)} (X,0_e) + T_{(e,e)} (0_e,Y) = X + Y  ,
$$
where $0_e$ is the neutral element of $TG$. Since $\Ad(g)$ is the tangent map of
$G \to G$, $x \mapsto gxg\inv$ at $e$, it is linear. 
Since the sequence (\ref{eqn:LieSequence}) is split exact, $TG$ is a semidirect product.
\end{proof}


\begin{example}\label{ex:tangentgroup}
We compute the tangent group of $\ul G = \ul{\GL_\K(V)}$:
by definition, $TG = \GL_{T\K} (TV)$, which is the same as
$\GL_\eps (V \oplus \eps V)$, the invertible $\K$-linear maps $F$ on $V \oplus \eps V$
commuting with $\eps = \bigl(\begin{smallmatrix} 0 & 0 \cr 1 & 0 \end{smallmatrix}\bigr)$.
The condition
$\bigl(\begin{smallmatrix} 0 & 0 \cr 1 & 0 \end{smallmatrix}\bigr)
\bigl(\begin{smallmatrix} a & b \cr c & d \end{smallmatrix}\bigr) =
\bigl(\begin{smallmatrix} a & b \cr c & d \end{smallmatrix}\bigr) 
\bigl(\begin{smallmatrix} 0 & 0 \cr 1 & 0 \end{smallmatrix}\bigr)$
leads to $a=d$, $b=0$, whence $F = F_{a,c}=\bigl(\begin{smallmatrix} a & 0 \cr c & a \end{smallmatrix}\bigr): (x+\eps v) \mapsto  a x + \eps (av + cx)$:
\begin{equation*}
TG = \{ F_{a,c}  \in \End_\K (TV) \mid \, 
a \in \GL_\K (V), c \in \End_\K (V) \} 
\end{equation*}
with product $F_{a,c} \circ F_{a',c'} = F_{aa',ac'+a'c}$.
\end{example}

\begin{example}
Let $\beta: V \times V \to V$ be a $\K$-bilinear map. We compute the tangent group of 
the automorphism group  $\ul G :=\ul{\Aut_\K(V,\beta)}$:
$TG$ is the group of all $F_{a,c}$ that preserve
$T\beta(x+\eps v, x' + \eps v') = \beta(x,x') + \eps (\beta(x,v') + \beta(x,v'))$.
This implies that $a \in \Aut_\K(V,\beta)$, and for $a=\id$, we get the condition
that $c$ is a derivation of $\beta$. Thus $TG$ is a semidirect product of
$\Aut_\K(V,\beta)$ with the vector group
\begin{equation*}
\Der_\K(V,\beta) = \{ c \in \End_\K(V) \mid \forall x,x' \in V :  c \beta(x,x')=\beta(cx,x')+\beta(x,cx') \} .
\end{equation*}
\end{example}

\subsection{Second approximation: Lie bracket}
As is well-known, the Lie bracket associated to a Lie group is defined in terms of
{\em second derivatives} of the group structure.  In order to explain this in a conceptual
and functorial way, consider  the sequence (\ref{eqn:tangentexact}) and
the diagrams for $TTM$ given at the end of the preceding 
section, and replace $M$ by $G$: then all spaces in questions are groups and all canonical maps
are group morphisms.  The group  $(TTG)_e$ is in general non-abelian, and the Lie bracket will be
defined in terms of  the {\em group commutator law of $G$}:
\begin{equation}
\ul\Gamma : \ul G \times \ul G  \to \ul G , \qquad 
\Gamma^\bA (g,h)=ghg\inv h\inv, \, \forall g,h \in G^\bA .
\end{equation}

\begin{theorem}[Lie bracket]\label{th:Liebracket}
Let $\ul G$ be a Weil Lie group over $\K$. Then, with $\nu$ defined by Eqn.\ (\ref{eqn:nu}),  the law
$$
\ul \Lambda := \ul \nu  \circ \ul \Gamma \circ (\ul \zeta_1 \times \ul \zeta_2) :
\ul \g \times \ul \g \to \ul \g
$$
defines a Lie algebra law on $\ul \g$. On the level of the second tangent group $TTG$, this
formula means that the Lie bracket is given for $X,Y \in \g$ by
$$
[ X,Y] := \Lambda^\K (X,Y) = \nu  ( \eps_1 X \cdot \eps_2 Y (\eps_1 X)\inv \cdot (\eps_2 Y)\inv ) .
$$
This formula can be rewritten, equivalently, by
using the notation $[ \, , \, ]_{grp}:=\Gamma$ for the group commutator, 
and by using a minus sign to denote the  difference of two elements of $(TTG)_e$  lying in the same fiber 
over $(TG \times_G TG)_e = \g \times \g$:
\begin{align*}
[\eps_1 X,\eps_2 Y]_{grp}  &=  \eps_1 \eps_2 [X,Y]_{alg}   \cr
\eps_1 X \cdot \eps_2 Y \cdot (\eps_1 X)\inv  - \eps_2 Y & =   \eps_1 \eps_2 [X,Y]_{alg}  \cr
\eps_1 X \cdot \eps_2 Y -  \eps_2 Y \cdot \eps_1 X  &=   \eps_1 \eps_2 [X,Y]_{alg} 
\end{align*}
\end{theorem}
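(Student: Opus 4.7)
The plan is to verify, in order, that $\ul\Lambda$ (i) is well-defined as a law $\ul\g \times \ul\g \to \ul\g$, (ii) is $\K$-bilinear, (iii) is antisymmetric, and (iv) satisfies the Jacobi identity. The three reformulations at the bottom of the statement are immediate rewritings: they use only the definition $\Gamma(g,h)=ghg\inv h\inv$ and the affine-bundle structure of $TTG$ over $TG\times_G TG$ with translation model the vertical bundle $G^{T\K\odot T\K}$ (Theorem \ref{th:vectorideal}), under which subtracting an element in a fiber corresponds to acting by the inverse in the translation group.

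For \emph{well-definedness}, I would exploit that $\Gamma(g,e)=e=\Gamma(e,h)$ identically on $\ul G$. Applying the tangent functor, both projections $p_1,p_2:TTG\to TG$ send $\Gamma(\zeta_1(X),\zeta_2(Y))$ to $e$; by the exact sequence (\ref{eqn:tangentexact}) applied to $G$, this element therefore lies in $G^{T\K\odot T\K}|_e$, which by $\nu$ is canonically identified with $\g$. For \emph{bilinearity}, I would use naturality of $\Gamma$ against the Weil-algebra endomorphisms $\rho_r$ of Lemma \ref{la:sum}: applying $\rho_r$ in the first (resp.\ second) tensor factor of $TT\K$ sends $\zeta_1(X)$ to $\zeta_1(rX)$ (resp.\ $\zeta_2(Y)$ to $\zeta_2(rY)$), giving scalar homogeneity. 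Additivity follows from $\zeta_1(X+X')=\zeta_1(X)\cdot\zeta_1(X')$ in the abelian subgroup $\zeta_1(\g)\subset TTG$ combined with the commutator identity $[gh,k]=[g,k]^{h}[h,k]$, once one knows that conjugation in $TTG$ by elements of $\zeta_1(\g)$ (or $\zeta_2(\g)$) acts trivially on the vertical subgroup $G^{T\K\odot T\K}|_e$. This last fact is a ``degree'' statement at the algebra level: mixed products in $TT\K$ of an $\eps_1$-component with an $\eps_1\eps_2$-component involve $\eps_1^2=0$, as one checks concretely in $\GL(V)$ and then transports by functoriality.

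For \emph{antisymmetry}, I would invoke the canonical flip $\tau$ from (\ref{eqn:flip}): it exchanges $\eps_1$ and $\eps_2$ but fixes $\eps_1\eps_2$, hence commutes with $\nu$. Combined with the group identity $\Gamma(h,g)=\Gamma(g,h)\inv$ and the fact (Theorem \ref{th:Lie1}) that inversion in the abelian vertical subgroup is negation, this yields $\Lambda(Y,X)=-\Lambda(X,Y)$. For the \emph{Jacobi identity}, I would pass to the third tangent group $T^3 G := G^{T\K\otimes T\K\otimes T\K}$ equipped with three independent infinitesimals $\eps_1,\eps_2,\eps_3$, embed $\g$ three times via $\zeta_1,\zeta_2,\zeta_3$, and apply the Hall--Witt identity in this Weil Lie group to $a=\zeta_1(X),b=\zeta_2(Y),c=\zeta_3(Z)$. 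Reading off the ``top'' $\eps_1\eps_2\eps_3$-component (canonically identified with $\g$ by the triple analog of $\nu$) yields $[[X,Y],Z]+[[Y,Z],X]+[[Z,X],Y]=0$.

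\textbf{Main obstacle.} The Jacobi step is the hardest. Its rigorous execution requires a clean description of $T^3 G$ as an iterated affine bundle, the isolation of the triply-vertical summand, and a verification that every conjugation appearing in the Hall--Witt identity trivializes on this summand -- again by ``degree'' arguments in $T\K\otimes T\K\otimes T\K$. The cleanest formulation follows Demazure--Gabriel \cite{DG}: it invokes only functoriality together with the exactness and affine-bundle results proved earlier in the paper, without any use of an atlas on $G$ or global analytic structure.
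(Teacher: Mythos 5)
Your treatment of well-definedness (via the projection $p_{12}$ onto $TG\times_G TG$ and the exact sequence) and of antisymmetry (via the flip $\tau$ of (\ref{eqn:flip}) together with $\Gamma(h,g)=\Gamma(g,h)\inv$ and the identification of vertical inversion with negation) coincides with the paper's proof. The other two steps take genuinely different routes. For bilinearity the paper does not pass through the commutator identity $[gh,k]=[g,k]^{h}[h,k]$; it invokes instead the symmetry of iterated tangent maps of a binary law, $TTf(u,v)=T(Tf(\cdot,v))(u)=T(Tf(u,\cdot))(v)$, applied to $f=\Gamma$, so that linearity in each slot is inherited directly from the fiberwise linearity of tangent maps of Weil varieties (Theorem \ref{th:WeilLinear}). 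Your route can be made to work, but the auxiliary fact it needs --- that conjugation by $\zeta_1(\g)$ and $\zeta_2(\g)$ acts trivially on the vertical subgroup of $(TTG)_e$ --- is not established by ``checking in $\GL(V)$ and transporting by functoriality'': a general Weil Lie group admits no faithful linear representation, so a matrix computation proves nothing about $\ul G$. The fact is true (it amounts to the centrality of the kernel of $p_{12}:(TTG)_e\to\g\times\g$) but must be derived intrinsically from the affine-bundle structure of Theorem \ref{th:vectorideal} and naturality; the paper's argument sidesteps the issue entirely.

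For the Jacobi identity the divergence is sharper. The paper deliberately avoids the third tangent group: it observes that naturality of the construction makes $\ul\Ad:\ul G\to\ul{\Aut_\K(\g,[\,,\,])}$ a morphism of Weil Lie groups, so that $\ad:=T\Ad\vert_\g$ takes values in $\Der(\g)$ (using the computation of the tangent group of an automorphism group following Example \ref{ex:tangentgroup}); the already-established identity $\eps_1X\cdot\eps_2Y\cdot(\eps_1X)\inv=\eps_2Y+\eps_1\eps_2[X,Y]$ gives $[X,Y]=\ad(X)Y$, and $\ad(X)\in\Der(\g)$ \emph{is} the Jacobi identity. This buys a proof that stays entirely at second order. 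Your Hall--Witt computation in $T^3G$ is a legitimate alternative --- the Remark following the theorem points to \cite{Be08}, p.~117, where exactly this is carried out --- but it is the heaviest part of the argument, and you have explicitly left it unexecuted (the ``main obstacle''), so as written the Jacobi identity is asserted rather than proved. Note also that your closing appeal to Demazure--Gabriel is slightly at cross purposes: the argument of \cite{DG} that the paper follows is the $\ad$-as-derivation argument just described, not the Hall--Witt identity. If you want the shortest complete proof, replace your Jacobi step by that one.
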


\begin{proof} 
In the following proof, we write  arguments using elements on the level of $TTG$;
it is possible to formulate things in an element-free way using diagrams (hence 
showing that actually all identities are laws over $\K$); but this would make the proof
longer and harder to read, and so we leave this task to the reader.

\ssk
First of all, the maps are well-defined, and all three  formulae for the Lie bracket agree:
let $X,Y \in \g$; since $p_{12}$ defines a group morphism $(TTG)_e \to \g \times \g$,
and $\g \times \g$ is abelian, we have
$$
p_{12}(\eps_1 X \cdot \eps_2 Y ) = p_{12}(\eps_1 X)  + p_{12}(\eps_2 Y) = \eps_1 X + \eps_2 Y=
p_{12} (\eps_2 Y \cdot \eps_1 X)
$$ 
and likewise
$p_{12} (\eps_1 X \cdot \eps_2 Y \cdot (\eps_1 X)\inv  \cdot (\eps_2 Y)\inv  ) = 0$.
It follows that $Z:=[\eps_1X,\eps_2 Y]_{grp} $ belongs to the vertical fiber over $e$, whence 
$\nu  [\eps_1X,\eps_2Y]_{grp} \in T_e G=\g$ is well-defined.
Moreover,
$Z \cdot   \eps_2 Y \eps_1 X = \eps_1X \eps_2 Y$, whence
$Z = \eps_1X \eps_2 Y - \eps_2 Y \eps_1 X $ by definition of the affine bundle
$TTG$ over $TG \times_G TG$ in the preceding chapter.
Similarly, $Z$ can be written as
$Z = \eps_1 X \cdot \eps_2 Y \cdot (\eps_1 X)\inv  - \eps_2 Y$.

\ssk
Let us show that $[Y,X]=-[Y,X]$.
Classically, this relies on symmetry of second differentials; in our setting, this corresponds to
the fact that the preceding construction is natural and hence commutes with the 
flip $\tau:TTG \to TTG$ exchanging
$\eps_1$ and $\eps_2$ (equation (\ref{eqn:flip}),
that is,
\begin{align*}
\eps_1 \eps_2 [X,Y] & =
\tau (\eps_1 \eps_2 [X,Y]) =
\tau ( \eps_1 X \cdot \eps_2 Y \cdot (\eps_1 X)\inv \cdot (\eps_2 Y)\inv ) \cr
&=
\eps_2 X \eps_1 Y ( \eps_2 X)\inv \cdot (\eps_1 Y)\inv  = 
(\eps_1 \eps_2 [Y,X])\inv .
\end{align*}
Now, inversion in the vertical bundle  is  $Z \mapsto -Z$, whence
$[X,Y]= - [Y,X]$.

\ssk
In order to show that the bracket is $\K$-bilinear, we use the following fact, which is
obvious in differential calculus, and whose proof in the category of Weil spaces is
is rather straightforward and will be omitted here:
for a binary map $\ul f: \ul M \times \ul N \to \ul P$ and $u\in TTM$, $v \in TTN$, we have
maps $Tf(u,\cdot):TN \to TP$ and $Tf(\cdot,v) :TM \to TP$, and then
$$
TTf (u,v) = T (Tf(\cdot,v))(u) = T(Tf(u,\cdot))v .
$$
Apply this to the commutator map $f=\Gamma$ and $u=\eps_1 X$, $v=\eps_2 Y$ to
see that the expression is linear in $X$ and in $Y$ (since tangent maps for Weil varieties
are linear). 

\ssk
Finally, in order to prove the Jacobi identity, various strategies are possible.
The simplest is  probably the one given in \cite{DG}, II.\S 4, no. 4,
following the classical fact that $[X,Y]=\ad(X)Y$, where $\ad$ is the derivative of the adjoint
representation $\Ad$ at $e$: 
the construction of the Lie bracket is natural, that is, $G$ acts via the adjoint representation
by {\em automorphisms} of the Lie algebra, so  we have a morphism
$$
\ul \Ad : \ul G \to \ul{\Aut_\K(\g, [ \, , \, ])} .
$$
In particular, $T\Ad$ is a morphism $TG \to T(\Aut(\g))$, and hence also its restriction to
fibers over neutral elements is a morphism ($\K$-linear map, since these fibers are just
vector groups). From Exemple \ref{ex:tangentgroup}  we know that the fiber of $T(\Aut(\g))$ over $\id$ is
$\Der(\g)$, hence we have
$$
\ad := T\Ad \vert_\g : \g \to \Der(\g) .
$$
The formula $\eps_1 X \cdot \eps_2 Y \cdot (\eps_1 X)\inv  =  \eps_2 Y  +   \eps_1 \eps_2 [X,Y]_{alg}$
already established above shows that
 $[X,Y]=\ad(X)Y$, and saying that $\ad(X) \in \Der(\g)$
gives the Jacobi identity.
\end{proof}

\begin{remark}  Some words on other proofs of the Jacobi identity.
In one way or another, one has to use third derivatives.
A proof that can be adapted to the present context is given in \cite{Be08}, p.\ 117:
the higher order tangent groups $TG, TTG, T^3 G$ can be trivialized from the left or
from the right, and the group law can then be described by explicit formulae. 
Using this, expanding  $\eps_3 Z \cdot (\eps_2 Y \cdot \eps_1 Z)
= (\eps_3 Z \cdot \eps_2 Y) \cdot \eps_1 Z$ in two different ways and
comparing, the Jacobi identity drops out. 
This proof is closely related to the proof by using Hall's identity (see \cite{Ko10}, p.\ 219  for a detailed 
discussion, following Serre's proof).   
Also, the recent work \cite{Viz} is closely related to this --
it would be very interesting, if the trivialization of the jet bundles $J^k G$ used in
loc.\ cit.\  can be defined in our categorical context; then this would lead to another 
approach to the Lie bracket in terms of the jet bundles $J^2 G$, $J^3 G$.
Finally, most textbooks define the Lie algebra of usual Lie groups
$G$ in terms of left invariant vector fields, see next section.
\end{remark}

\subsection{Weil Lie torsors (grouds) and symmetric spaces}\label{sec:ss} 

\begin{definition}
A {\em Weil variety with binary (ternary) multiplication} is a Weil variety  $\ul M$ together with a  binary or ternary Weil law
$\ul \mu: \ul M \times \ul M \to \ul M$, resp.\ $\ul \mu : \ul M \times \ul M \times \ul M \to \ul M$.
{\em Morphisms} are Weil laws compatible with the respective  multiplication laws. 
\end{definition}

\begin{definition}
A {\em Weil Lie torsor}\footnote{Besides ``torsor"", other existing terminology is: {\em groud, heap, pregroup,...}, cf.\  \cite{BeKi1}.}
 is a Weil variety $\ul G$ together with a  Weil law
$\ul \mu : \ul G^3 \to \ul G$, such that, for each Weil algebra $\bA$, writing
$(xyz)^\bA$ instead of $\mu^\bA(x,y,z)$,

\begin{enumerate}
\item
$\forall x,y \in G^\bA$: $(x x y)^\bA = y = (yxx)^\bA$,
\item
$\forall x,y,u,v,w \in G^\bA$: $(xy(uvw)^\bA)^\bA) = ((xyu)^\bA vw)^\bA) = (x(vuy)^\bA w)^\bA$.
\end{enumerate}
\end{definition}

Every Weil Lie group $(\ul G,\ul m)$ with the law
$(xyz)^\bA := m ^\bA (x,m^\bA (y,z))$ is a Weil Lie torsor, and, conversely, any choice of base point $\ul y$ in
a Weil Lie torsor gives a Lie group with neutral element $\ul y$ and product
$m^\bA (x,y^\bA,z)$ (cf.\ \cite{BeKi1}). 
Thus Lie theory of Weil Lie torsors is the same as ``base point-free Lie theory''.
For instance, rewriting Theorem \ref{th:Liebracket} in a base-point free way gives a {\em field of Lie brackets},
which can also be interpreted as the {\em torsion tensor of a canonical connection} (cf.\ \cite{Be08}):

\begin{theorem}
On a Weil Lie torsor $\ul G$, consider the Weil law given for each $\bA$ by
$$
\Gamma^\bA (x,y,z):= ( x(xyz)^\bA z)^\bA = ((xzy)^\bA xz)^\bA = (xz (yxz)^\bA )^\bA .
$$
Then the law
$$
\ul \Lambda := \ul \nu \circ \ul \Gamma \circ (\zeta_1 \times \zeta_2) : \ul{TG} \times_G \ul{TG} \to \ul{TG}
$$
defines a field of Lie algebra laws on $\ul G$. It can also be written as
$$
\bigl(\eps_1 X , 0_y , \eps_2 Z \bigr) - 
\bigl(\eps_2 Z, 0_y, \eps_1 X \bigr) =
\eps_1 \eps_2 [X,Z]_y.
$$
\end{theorem}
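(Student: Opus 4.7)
The plan is to reduce the statement to Theorem \ref{th:Liebracket} by choosing, pointwise, a base point that turns the torsor into a Weil Lie group. For any $y \in G$, the ternary law $\ul\mu$ promotes $\ul G$ to a Weil Lie group $\ul G_y$ with binary product $x \cdot_y z := (xyz)^\bA$, identity $y$, and inversion $x \mapsto (yxy)^\bA$; the group axioms follow routinely from the torsor axioms (1) and (2), and the Lie algebra of $\ul G_y$ in the sense of Theorem \ref{th:Liebracket} is carried by $T_y G$.

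The first step is to verify that the three expressions for $\Gamma^\bA$ coincide. A single application of axiom (2) with the substitution $(a,b,c,d,e) = (x,z,y,x,z)$ yields both
\[
(x(xyz)z) \;=\; (xz(yxz)) \quad\text{and}\quad (xz(yxz)) \;=\; ((xzy)xz).
\]
Equivalently, after fixing any base point and writing $(abc) = ab^{-1}c$, each expression reduces to the word $xz^{-1}yx^{-1}z$.

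The second step is to identify $\Lambda^\bA$ fiberwise with a group commutator bracket. Pick $y \in G$ and work in the Weil Lie group $\ul G_y^{TT\K}$ with identity $0_y$. Then for $X, Z \in T_y G$,
\[
\Gamma^{TT\K}(\eps_1 X, 0_y, \eps_2 Z) \;=\; \eps_1 X \cdot_y (\eps_2 Z)^{-1,y} \cdot_y (\eps_1 X)^{-1,y} \cdot_y \eps_2 Z,
\]
since $0_y$ is the identity and $(abc) = a \cdot_y b^{-1,y} \cdot_y c$. The elements $\eps_1 X$ and $\eps_2 Z$ lie in the identity fibers of the projections $p_1$, resp.\ $p_2$, from $TTG$ to $TG$ (which are abelian by Theorem \ref{th:Lie1}), so their group inverses in $\ul G_y^{TT\K}$ coincide with fiber negations. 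Exactly as in the proof of Theorem \ref{th:Liebracket}, the commutator is annihilated by $p_{12}$, hence lies in the vertical bundle where $\nu$ is defined, and Theorem \ref{th:Liebracket} identifies $\nu$ of it with the Lie algebra bracket of $\ul G_y$ on $T_y G$. For the equivalent subtraction form, one uses that $(\eps_1 X, 0_y, \eps_2 Z) = \eps_1 X \cdot_y \eps_2 Z$ and $(\eps_2 Z, 0_y, \eps_1 X) = \eps_2 Z \cdot_y \eps_1 X$ in $\ul G_y^{TT\K}$, so their affine-fiber difference is $\eps_1 \eps_2 [X,Z]_y$ by the third form of Theorem \ref{th:Liebracket}.

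From here, bilinearity, antisymmetry, and Jacobi for $\Lambda^\K$ on each fiber $T_y G \times T_y G$ transfer directly from Theorem \ref{th:Liebracket} applied to $\ul G_y$. Since $\ul\Lambda$ is defined base-point-freely from the structural data $\ul\mu, \ul\zeta_1, \ul\zeta_2, \ul\nu$, the fiberwise brackets automatically assemble into a Weil law on $\ul{TG}\times_G\ul{TG}$, i.e., a field of Lie algebra laws on $\ul G$. The main technical obstacle I expect is the sign-convention bookkeeping: reconciling the commutator form $\Gamma^\bA|_{y=e} = [x, z^{-1}]^{grp}_y$ that falls directly out of $(x(xyz)z)$ with the symmetric-difference expression in the subtraction identity, and confirming via antisymmetry of the bracket that the two expressions define the same Lie algebra law up to the intended normalization.
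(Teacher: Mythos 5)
Your proof is correct and follows exactly the route the paper intends (the paper offers no explicit argument for this theorem beyond the remark that it is Theorem~\ref{th:Liebracket} ``rewritten in a base-point free way''): fix $y$, pass to the group $\ul G_y$ with product $(x\,y\,z)$, identify $\ul\Gamma$ fiberwise with a group commutator in $(G_y)^{TT\K}$, and quote Theorem~\ref{th:Liebracket}. The sign issue you flag is genuine --- at a base point $\Gamma^\bA(x,y,z)$ reduces to the word $xz\inv x\inv z=[x,z\inv]_{grp}$, so $\ul\nu\circ\ul\Gamma\circ(\ul\zeta_1\times\ul\zeta_2)$ yields $-[X,Z]_y$ whereas the displayed subtraction identity yields $+[X,Z]_y$ --- but since the negative of a Lie bracket is again a Lie bracket, the conclusion that $\ul\Lambda$ defines a field of Lie algebra laws is unaffected.
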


\begin{definition}
 A  {\em (Weil) symmetric space} is a Weil space with a  binary Weil law
$\ul s: \ul M \times \ul M \to \ul M$ such that, for each Weil algebra $\bA$,

\begin{enumerate}
\item
$\forall x \in M^\bA$: $s^\bA(x,x)=x$
\item
$\forall x,y \in M^\bA$: $s^\bA (x,s^\bA(x,y)) = y$
\item
$\forall x,y,z \in M$: $s^\bA (x , s^\bA (y , s^\bA(x,z))) = s^\bA( s^\bA(x,y),z)$
\item
$\forall x \in M^\bA$, $y \in T_x (M^\bA)$: $s^{T\bA}(0_x,y) = -y$.
\end{enumerate}
\end{definition}

\begin{theorem}\label{th:LG-SS}

\begin{enumerate}
\item
For each Weil algebra $\bA$,
the Weil bundle $\ul{M}^\bA$ of a symmetric space $\ul M$ is again a symmetric space.
\item
Every Weil Lie group $\ul G$ becomes a symmetric space when equipped with the law
$s^\bA(x,y)=xy\inv x$.
\end{enumerate}
\end{theorem}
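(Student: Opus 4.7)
To prove part (1), the plan is to equip $\ul M^\bA$ with the binary Weil law $\ul{\tilde s}$ defined, for each Weil algebra $\bB$, by $\tilde s^\bB := s^{\bA \otimes \bB}$. Functoriality in $\bB$ is inherited from functoriality of $\ul s$ together with transitivity of extension (Theorem \ref{th:transitivity}). Axioms (1)--(3) are universal identities in two or three arguments; since they hold for $\ul M$ at every Weil algebra $\bC$, they hold in particular at $\bC = \bA \otimes \bB$, hence for $\ul M^\bA$ at $\bB$. For axiom (4), I would invoke the canonical isomorphism of Weil algebras $\bA \otimes T\bB \cong T(\bA \otimes \bB)$ coming from associativity of the tensor product; this identifies $\tilde s^{T\bB} = s^{\bA \otimes T\bB}$ with $s^{T(\bA \otimes \bB)}$ and the tangent fibers $T_x((M^\bA)^\bB)$ with $T_x(M^{\bA \otimes \bB})$. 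Axiom (4) for $\ul M$ at the Weil algebra $\bA \otimes \bB$ then yields the required identity directly.

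For part (2), axioms (1)--(3) are purely group-theoretic identities valid in any $G^\bA$: one has $xx^{-1}x = x$ for (1); a short calculation $x(xy^{-1}x)^{-1}x = x \cdot x^{-1} y x^{-1} \cdot x = y$ for (2); and expanding both sides of (3) yields the common value $xy^{-1}xz^{-1}xy^{-1}x$. These hold in every $G^\bA$, so they define a Weil law satisfying the first three axioms.

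The substantive step is axiom (4). Fix $x \in G^\bA$ and $y \in T_x G^\bA$. In $TG^\bA = G^{T\bA}$ one has $0_x = \zeta(x)$, so
\[
s^{T\bA}(0_x, y) \;=\; \zeta(x) \cdot y^{-1} \cdot \zeta(x).
\]
Writing $y = \zeta(x) \cdot \tilde y$ with $\tilde y \in T_e G^\bA = \g^\bA$ and using that $\zeta$ is a group homomorphism, the right-hand side collapses to $\zeta(x) \cdot \tilde y^{-1}$. Left multiplication by $\zeta(x)$ in $TG^\bA$ is the $T\K$-tangent of left multiplication by $x$ in $G^\bA$, hence $\K$-linear on the fiber $T_e G^\bA \to T_x G^\bA$ by Theorem \ref{th:WeilLinear}; in particular it sends $-\tilde y$ to $-y$. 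By Theorem \ref{th:Lie1} the group law on $\g^\bA$ coincides with vector addition, so $\tilde y^{-1} = -\tilde y$, and the chain of identifications delivers $s^{T\bA}(0_x, y) = -y$.

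The main obstacle is axiom (4) in part (2): one has to reconcile three a priori distinct structures on the tangent fiber $T_x G^\bA$, namely the $\K$-module structure from Theorem \ref{th:WeilLinear}, the bijection with $\g^\bA$ given by left translation, and the additive group law on $\g^\bA \subset TG^\bA$ from Theorem \ref{th:Lie1}. Once these compatibilities are made explicit, the computation is essentially a one-line cancellation. Part (1) by contrast is bookkeeping with the transitivity isomorphism $\bA \otimes T\bB \cong T(\bA \otimes \bB)$ and should not present difficulty.
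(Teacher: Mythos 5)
Your proposal is correct and follows essentially the same route as the paper: part (1) is the observation that the symmetric-space axioms are natural identities preserved under the (product-preserving) extension functor --- your evaluation at $\bA\otimes\bB$ via transitivity is just the explicit form of the paper's diagrammatic argument, including the identification $\bA\otimes T\bB\cong T(\bA\otimes\bB)$ needed for axiom (4) --- and part (2) reduces axioms (1)--(3) to group identities and axiom (4) to the fact that inversion has tangent map $-\id$ at the identity. Your unwinding of that last point through left translation and Theorem \ref{th:Lie1} is a correct expansion of what the paper states in one line.
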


\begin{proof}
(1)  All four
 axioms can be expressed by commutative diagrams
involving  the structure map $s$ and natural maps such as the diagonal imbedding, so that 
applying a functor $T^\bB$ yields structures of the same type. 
The diagrams for the first three axioms are given in \cite{Lo69}, p.\ 75;  to write axiom (4) as a diagram,
use zero section $\zeta:M^\bA \to TM^\bA$ and fiberwise multiplication by $-1$,
$(-1)_{TM^\bA}:TM^\bA \to TM^\bA$.

(2)  Any abstract group $G$ with $s(x,y)=xy\inv x$ satisfies (1), (2), (3). If $\ul G$ is a Weil Lie group, then 
 (4) holds since, in a Lie group,
 the tangent map of inversion at the origin is $-\id_{T_e G}$.
\end{proof}


\nin
Now one can define a {\em Lie triple system} attached to a symmetric space, see
 Cor.\  \ref{cor:LTS}.

\section{Vector fields  and groups of infinitesimal automorphisms}

\subsection{Structure of $\bA$-automorphism groups}
The following results provide  important general tools for differential geometry.
Since the proofs follow closely the ones given
\cite{Be08}, Chapter 28, our account  will be rather concise.

\begin{definition}
Let $\ul M$ be a Weil space and $\bA$ a Weil algebra.
An {\em infinitesimal endomorphism} of the Weil bundle $\ul M^\bA$ is an $\bA$-endomorphism
$$
\ul F : \ul{M}^\bA \to \ul{M}^{\bA}
$$
preserving fibers over $\ul M$, that is, such that $\ul p^\bA \circ \ul F = \ul p^\bA$.
It is called an {\em infinitesimal automorphism} if, moreover, it is an automorphism of $\ul{M}^\bA$.
Obviously, the infinitesimal automorphisms form a group, which we denote by
$\Infaut_\bA(\ul M^\bA)$.
\end{definition}

\begin{theorem}
\label{th:Aut1} 
For any Weil space $\ul M$,
the group $\Infaut_\bA(\ul M^\bA)$ is a normal subgroup of $\Aut_\bA(\ul M^\bA)$; more precisely,
it fits into the following exact sequence of groups
$$
\begin{matrix} 
1 & \to & \Infaut_\bA(\ul M^\bA) & \to & \Aut_\bA(\ul M^\bA) & \to & \Aut_\K(\ul M) & \to & 1 
\end{matrix}
$$
which splits via $\Aut_\K(\ul M) \to \Aut_\bA(\ul M^\bA)$, $\ul g \mapsto \ul g^\bA$.
\end{theorem}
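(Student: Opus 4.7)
The plan is in two halves: first construct a surjective group morphism $q : \Aut_\bA(\ul M^\bA) \to \Aut_\K(\ul M)$, then identify its kernel with $\Infaut_\bA(\ul M^\bA)$. The splitting is the easy direction: applying the scalar-extension functor $T^{\bA,\K}$ of Theorem \ref{th:transitivity} to $\ul g \in \Aut_\K(\ul M)$ produces $\ul g^\bA := T^{\bA,\K}\ul g \in \Aut_\bA(\ul M^\bA)$, and functoriality of $T^{\bA,\K}$ makes the assignment $s: \ul g \mapsto \ul g^\bA$ a group homomorphism.

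For $q$, the guiding idea is that every $\ul F \in \Aut_\bA(\ul M^\bA)$ is automatically a bundle automorphism of the $\K$-Weil fibration $\ul p^\bA : T^\bA \ul M \to \ul M$, covering a unique $\K$-Weil automorphism of the base. Viewing $\ul F$ as a $\K$-Weil morphism via scalar restriction $T^{\K,\bA}$, and using the zero section $\ul\zeta : \ul M \to T^\bA \ul M$ induced by $\K \hookrightarrow \bA$, I set
\[
q(\ul F) \,:=\, \ul p^\bA \circ T^{\K,\bA}\ul F \circ \ul\zeta \, : \, \ul M \to \ul M .
\]
The crux of the proof is the covering identity
\[
\ul p^\bA \circ T^{\K,\bA}\ul F \,=\, q(\ul F) \circ \ul p^\bA .
\]
I would verify this fiberwise: for each $\bB \in \Walg_\K$ the tensor $\bA \otimes_\K \bB$ is an $\bA$-Weil algebra via its first factor, and applying $\bA$-naturality of $\ul F$ to the $\bA$-algebra zero section $\bA \to \bA \otimes_\K \bB$ and the $\bA$-algebra projection $\bA \otimes_\K \bB \to \bA$ (induced by $\bB \to \K$), combined with the $\K$-module decomposition $\bA \otimes_\K \bB = \bA \oplus (\bA \otimes_\K \mbB)$, forces $F^{\bA \otimes \bB}$ to descend to a well-defined map $M^\bB \to M^\bB$. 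Naturality of this descent in $\bB$, which must be checked with respect to $\K$-algebra morphisms $\bB_1 \to \bB_2$, then identifies it with $q(\ul F)^\bB$. Establishing the covering identity is the main obstacle; everything else will follow formally.

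Granting the covering identity, the remainder is routine. Since $\ul p^\bA$ is a split epimorphism in $\ul{\ul\Weil}_\K$ (split by $\ul\zeta$), the computation
\[
q(\ul F_1 \ul F_2) \circ \ul p^\bA
\,=\, \ul p^\bA \circ T^{\K,\bA}(\ul F_1 \ul F_2)
\,=\, q(\ul F_1) \circ q(\ul F_2) \circ \ul p^\bA
\]
yields $q(\ul F_1 \ul F_2) = q(\ul F_1)\, q(\ul F_2)$, so $q$ is a group morphism and $q(\ul F^{-1}) = q(\ul F)^{-1}$; in particular $q(\ul F) \in \Aut_\K(\ul M)$. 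The identity $q \circ s = \ul\id$ reduces to $\ul p^\bA \circ \ul g^\bA \circ \ul\zeta = \ul g$, which follows from naturality of $\ul\zeta$ and $\ul p^\bA$ applied to the $\K$-Weil morphism $\ul g$. Finally, $\ul F$ lies in $\ker q$ iff $q(\ul F) = \ul\id$, which by the covering identity is equivalent to $\ul p^\bA \circ T^{\K,\bA} \ul F = \ul p^\bA$, i.e., the defining condition of $\Infaut_\bA(\ul M^\bA)$. Normality of the kernel and exactness of the sequence are then automatic, and surjectivity of $q$ follows from the existence of the splitting.
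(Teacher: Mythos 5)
Your overall strategy is the same as the paper's: the paper defines exactly your $q(\ul F)$ as the ``base law'' $\ul p \circ \ul F^\K \circ \ul z$, asserts that it depends functorially on $\ul F$, identifies its kernel with $\Infaut_\bA(\ul M^\bA)$, and splits the sequence by $\ul g \mapsto \ul g^\bA$. You have, moreover, correctly isolated the one statement on which everything rests, namely the covering identity $\ul p^\bA \circ T^{\K,\bA}\ul F = q(\ul F)\circ \ul p^\bA$; the paper uses it tacitly both for the homomorphism property of $q$ and for the identification of the kernel. Granting that identity, the rest of your argument is correct and complete.

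The gap is in your proposed derivation of the covering identity, and it is genuine. The $\bA$-algebra morphisms you invoke --- the zero section $\bA \to \bA\otimes_\K\bB$ and the projection $\id_\bA\otimes\pi_\bB : \bA\otimes_\K\bB \to \bA$ --- together with the splitting $\bA\otimes_\K\bB = \bA \oplus (\bA\otimes_\K\mbB)$, describe the fibration of $M^{\bA\otimes\bB}$ over its \emph{$\bA$-base} $M^\bA$; $\bA$-naturality of $\ul F$ therefore only yields $M^{\id_\bA\otimes\pi_\bB}\circ F^{\bA\otimes\bB} = F^\bA \circ M^{\id_\bA\otimes\pi_\bB}$. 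The covering identity concerns the \emph{other} projection, $\pi_\bA\otimes\id_\bB : \bA\otimes_\K\bB \to \bB$, which is a $\K$-algebra morphism but not an $\bA$-algebra morphism (its target is not an object of $\ul\Walg_\bA$), so naturality of $\ul F$ cannot be applied to it. In fact the identity can fail for a general Weil space: take $\ul M = \ul\K$ and $\bA = T\K$. For $\bD \in \ul\Walg_\bA$ write $c = c_0 + \mathring c$ with $c_0 \in \bA$ and $\mathring c$ in the $\bA$-fiber of $\bD$; then for any set-theoretic bijection $g$ of $\bA$ the family $F^\bD(c) := g(c_0) + \mathring c$ is a natural $\bA$-automorphism of $\ul\K^\bA$, because every morphism in $\ul\Walg_\bA$ fixes $\bA$ pointwise and preserves $\bA$-fibers. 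Choosing $g(x+\eps y) = (x+y) + \eps y$, one checks that the base law $q(\ul F)$ is the identity of $\ul\K$, yet already at $\bD = \bA$ one has $\pi_\bA(F^\bA(\eps)) = \pi_\bA(1+\eps) = 1 \neq 0 = \pi_\bA(\eps)$, so $\ul p^\bA \circ \ul F \neq \ul p^\bA$ and $\ul F \notin \Infaut_\bA(\ul M^\bA)$. Hence the covering identity is not a formal consequence of $\bA$-naturality, and without it the kernel of $q$ is strictly larger than $\Infaut_\bA(\ul M^\bA)$ (indeed $q$ need not even be multiplicative on all of $\Aut_\bA(\ul M^\bA)$). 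To repair the argument one must either restrict attention to the subgroup of $\bA$-automorphisms that do cover an automorphism of the base --- for which your remaining steps go through verbatim --- or impose a hypothesis on $\ul M$ forcing every $\bA$-automorphism to be of this kind; as it stands, this step cannot be filled in the generality claimed.
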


\begin{proof}
The Weil algebra projection $\bA \to \K$ induces, for any $\bA$-Weil law $\ul F : \ul M^\bA \to \ul N^\bA$,
a map $F^\K : (M^\bA)^\K \to (N^\bA)^\K$, and similarly a law $\ul F^\K$(recall Theorem \ref{th:transitivity}).
Now define the $\K$-Weil law 
$\ul f := \ul p \circ \ul F^\K \circ \ul z:\ul M \to \ul N$ (where $p:(N^\bA)^\K \to N$ is projection and
$z:M \to (M^\bA)^\K$ injection).
Let us call $\ul f$ the {\em base law of $\ul F$}.
It depends functorially on $\ul F$; in particular, for $\ul M = \ul N$, it defines a group morphism
$\Aut_\bA(M^\bA)  \to  \Aut_\K(\ul M)$ whose kernel is formed by automorphisms having as base law the identity
law on $\ul M$, that is,  the infinitesimal automorphisms.
It is also clear that for $\ul g:\ul M \to \ul M$, the base law of $\ul g^\bA$ is $\ul g$ itself, hence the
group  morphism splits via $\ul g \mapsto \ul g^\bA$, and thus the sequence is exact.
\end{proof}

\begin{definition}
Let $\ul M$ be a Weil space and $\bA$ a Weil algebra.
An {\em $\bA$-vector field} is a section of the canonical projection
$\ul p^\bA : \ul M^{\bA} \to \ul M$, that is, a $\K$-Weil law
$
\ul X : \ul M \to \ul M^{\bA}
$
such that $\ul p \circ \ul X = \ul\id_M$.
%
\end{definition}

\begin{theorem}
\label{th:Aut2}
Let $\ul M$ be a Weil space and $\bA$ a Weil algebra.
\begin{enumerate}
\item
There is a canonical bijection between $\bA$-vector fields $\ul X$ and infinitesimal endomorphisms $\ul F$, 
given by associating to $\ul F$ the $\bA$-vector field $\ul X := \ul F \circ \ul \zeta$, and to $\ul X$ the infinitesimal
endomorphism given by 
$$
\ul F:= \ul \mu \circ \ul X^\bA : \ul M^\bA \to  \ul M^{\bA \otimes \bA} \to \ul M^\bA,
$$
where $\mu  : \ul M^{\bA \otimes \bA} \to \ul M^\bA$ is the law induced by
$\mu : \bA \otimes \bA \to \bA$ (lemma \ref{la:sum}).
\item
The space of  $\bA$-vector fields forms a monoid  with respect to the law 
$\ul X \cdot \ul Y := \ul \mu \circ \ul X^\bA \circ \ul Y$ and neutral element the zero section. 
If $\ul M$ is a Weil variety and
  $\bA = TM$, this monoid  is the abelian group of ``usual'' vector fields with group law given by pointwise
addition in tangent spaces. 
\item
If $\ul M$ is a Weil manifold, then every infinitesimal endomorphism is an automorphism, and hence the monoid
from the preceding item is a group, for all Weil algebras $\bA$.
\end{enumerate}
\end{theorem}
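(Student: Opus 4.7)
The plan is to establish the three parts in sequence, using the algebra identities of Lemma \ref{la:sum} together with the scalar extension functors of Theorem \ref{th:transitivity}.

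For part (1), the backbone is the unit axiom $\mu \circ (\id_\bA \otimes \zeta) = \id_\bA$ for the multiplication of $\bA$. First I check that $\ul X := \ul F \circ \ul\zeta$ is a vector field: this is immediate from $\ul p^\bA \circ \ul F = \ul p^\bA$ and $\ul p^\bA \circ \ul\zeta = \ul{\id}_M$. Next I verify both round trips. Starting from $\ul X$, set $\ul F := \ul\mu \circ \ul X^\bA$; naturality of $\ul X:\ul M \to \ul{M^\bA}$ applied to the algebra morphism $\zeta:\K\to\bA$ shows that $\ul X^\bA \circ \ul\zeta$ coincides with the canonical morphism $\ul M^\bA \to \ul M^{\bA \otimes \bA}$ induced by $\id_\bA \otimes \zeta$, precomposed with $\ul X$; the unit axiom then collapses $\ul\mu$ composed with this morphism to $\ul{\id}_{M^\bA}$, yielding $\ul F \circ \ul\zeta = \ul X$. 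The reverse round trip is subtler: starting from $\ul F$ and setting $\ul X=\ul F \circ \ul\zeta$, one must show $\ul\mu \circ \ul X^\bA = \ul F$. This rests on the fact that $\ul F$ is an $\bA$-morphism in the sense of Theorem \ref{th:transitivity}, hence natural with respect to all $\bA$-algebra morphisms out of $\bA \otimes_\K \bA$, in particular the multiplication $\mu$ itself; the resulting naturality square, combined with fiber-preservation, is exactly the reconstruction equation.

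For part (2), the monoid structure on $\bA$-vector fields transports along the bijection of part (1) to composition of infinitesimal endomorphisms, so associativity and the neutral element axiom are automatic. In the case $\bA=T\K$ with $\ul M$ a Weil variety, Theorem \ref{th:WeilLinear} gives fibers $T_xM$ that are $\K$-modules on which the infinitesimal endomorphism $F:TM\to TM$ associated to $X$ acts fiberwise $T\K$-linearly while fixing the base point; such a map is necessarily a translation $v \mapsto v + X(x)$. Composition of translations is pointwise addition in fibers, identifying the monoid with the usual abelian group of vector fields.

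For part (3), the task is to invert $\ul F$ when $\ul M$ is a Weil manifold. Locally in a chart it suffices to work in a domain $\ul U \subset \ul V$, where the fiber $T^\bA_x U \cong V \otimes_\K \mbA$ carries the filtration $\mbA \supset \mbA^2 \supset \cdots$. Proceeding by induction on the height of $\bA$ and applying Theorem \ref{th:vectorideal} at each step to the vector ideal $\mbA^k/\mbA^{k+1}$, the map $F$ decomposes as a sequence of affine translations, each invertible with inverse obtained by negating the relevant component of $X$. Reassembling the local inverses via the atlas gives a global inverse Weil law. The principal obstacle of the theorem lies in part (1), specifically in making the scalar-extended $\bA$-naturality of $\ul F$ rigorous and unpacking it as the concrete reconstruction identity; once this is done, the monoid, abelian, and invertibility statements all follow from the linear and affine structure theory of Section \ref{sec:Weilvar}.
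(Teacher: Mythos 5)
Your overall strategy coincides with the one the paper actually relies on: the paper gives no details here, deferring to \cite{So12}, Thm.\ 8.2.2 and \cite{Be08}, Thm.\ 28.1, and what you write is a faithful reconstruction of that argument. In part (1) both round trips are right: the identity $\mu\circ(\id_\bA\otimes\zeta)=\id_\bA$ gives $\ul F\circ\ul\zeta=\ul X$, and for the converse the decisive input is indeed that $\ul F$, being an $\bA$-law, is natural with respect to the $\bA$-algebra morphism $\mu:\bA\otimes_\K\bA\to\bA$, which yields $\ul\mu\circ\ul F^{[\bA]}=\ul F\circ\ul\mu$ and hence $\ul\mu\circ\ul X^\bA=\ul F\circ\ul\mu\circ(\id\otimes\zeta)_*=\ul F$. (Fiber-preservation is not needed for this computation; it is needed only to see that $\ul F\circ\ul\zeta$ is a section and that $\ul\mu\circ\ul X^\bA$ preserves fibers, which you should state but which is routine.) Part (3) is the standard chart/unipotence argument and is fine in outline.

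The one step whose justification does not hold up as written is in part (2): you claim that a fiber-preserving map $F:TM\to TM$ that ``acts fiberwise $T\K$-linearly while fixing the base point'' is \emph{necessarily} a translation $v\mapsto v+X(x)$. That inference proves too much: the fiberwise dilation $x+\eps v\mapsto x+2\eps v$ also preserves fibers and is $\K$-linear in each fiber, yet it is not a translation (and, consistently, it is not an infinitesimal endomorphism --- by part (1) it would have to equal $\ul\mu\circ T\ul X$ with $X=F\circ\zeta=\zeta$, i.e.\ the identity). The correct route is to \emph{use} part (1): write $F=\ul\mu\circ T\ul X$ and compute in the affine bundle $TTM\to TM\times_M TM$ (in a trivialization, $TX(x+\eps_2 v)=x+\eps_1X(x)+\eps_2v+\eps_1\eps_2\,dX(x)v$, and $\mu$ kills the $\eps_1\eps_2$-component and adds the two tangent components, giving $F(x+\eps v)=x+\eps(v+X(x))$). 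This exhibits $F$ as fiberwise translation by $X(x)$, from which commutativity and the identification $\ul X\cdot\ul Y=\ul X+\ul Y$ (hence the group structure) follow. With that repair the proof is complete and agrees with the cited arguments.
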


\begin{proof}
(1), (2)
The proof  from \cite{So12}, Thm.\  8.2.2
 carries over almost word by word, so we will not repeat the details.
See  \cite{Be08}, Thm.\  28.1  for the case $\bA = T^k \K$.

(3)  
Both the proofs from \cite{Be08} and \cite{So12}, loc.cit., use chart arguments, and they carry over for general Weil
manifolds (but we do not know if the claim holds for general Weil varieties or even Weil spaces). 
\end{proof}

\begin{corollary}[Lie bracket of vector fields]
For any Weil variety  $\ul M$, the law $G^\bA := \Aut_\bA ( M^\bA)$ defines a Weil Lie group 
$\ul G = \ul{\Aut_\K (M)}$. Its Lie algebra is the space
$\ul \g = \ul{\XX}(TM)$ of sections of $\ul{TM}$ ( usual vector fields, identified with infinitesimal automorphisms), 
with Lie bracket being  the bracket of vector fields, described by the
formula from \cite{Be08}, Theorem 14.4:
\begin{equation}
[ {\eps_1 X} , {\eps_2 Y}]_{group} = \eps_1 \eps_2 [X,Y]_{alg} . 
\end{equation}
Moreover,  if $\ul M$ is a Weil manifold,
 the bracket may be computed in a chart by the usual chart formula in terms of ordinary differentials
 (as defined in equation (\ref{eqn:Adiff})):
\begin{equation}
[X,Y](x)=dX(x) Y(x) - dY(x)X(x) .
\end{equation}
\end{corollary}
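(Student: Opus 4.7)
The plan is to assemble the statement from the three preceding theorems: Theorem \ref{th:Aut1} identifies the fiber of $G^{T\K} \to G^\K$ over $e$ with infinitesimal automorphisms, Theorem \ref{th:Aut2} identifies these with vector fields, and Theorem \ref{th:Liebracket} provides the Lie bracket via the group commutator in $TTG$. The only real tasks are to verify that $\ul G$ is actually a Weil Lie group and to unpack the abstract bracket as the classical chart formula.

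First I would check that $\ul G$ is a Weil variety. Functoriality in $\bA$ follows from transitivity of extension (Theorem \ref{th:transitivity}): any morphism $\phi:\bA \to \bB$ carries an $\bA$-automorphism $F$ of $M^\bA$ to the $\bB$-automorphism $T^{\bB,\bA} F$ of $M^\bB$. The two Weil variety axioms for $\ul G$ descend from those for $\ul M$: an $\bA \times_\K \bB$-automorphism of $M^{\bA \times_\K \bB} = M^\bA \times_M M^\bB$ is uniquely determined by its two components agreeing on $M$, giving the natural bijection $G^{\bA \times_\K \bB} \cong G^\bA \times_{G^\K} G^\bB$, and similarly for the exactness condition induced by a vector ideal. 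Since composition, inversion, and unit are plainly natural transformations in $\bA$, $\ul G$ is a group object in Weil varieties, i.e.\ a Weil Lie group. Its Lie algebra is then $\g = \ker(G^{T\K} \to G^\K) = \Infaut_{T\K}(TM)$ by Theorem \ref{th:Aut1}, which by Theorem \ref{th:Aut2}(1) is canonically identified with $\XX(TM)$; moreover the abelian vector space structure on $\g$ supplied by Theorem \ref{th:Lie1} matches pointwise addition of vector fields via Theorem \ref{th:Aut2}(2).

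The abstract bracket formula $[\eps_1 X,\eps_2 Y]_{grp} = \eps_1\eps_2 [X,Y]_{alg}$ in $(TTG)_e$ is then exactly Theorem \ref{th:Liebracket} applied to $\ul G$: here $\eps_1 X$ denotes the image of $X \in \g$ under the zero section $\zeta_1$ extending scalars from $T\K$ to $TT\K = T\K \otimes T\K$, and the group product on $(TTG)_e$ corresponds, via Theorem \ref{th:Aut2}(2), to the composition monoid $X \cdot Y = \mu \circ X^\bA \circ Y$ of $TT\K$-vector fields. For the chart formula, I would restrict to a chart $\ul U \subset \ul V$ of the Weil manifold $\ul M$: using the trivialization $V^{TT\K} = V \oplus \eps_1 V \oplus \eps_2 V \oplus \eps_1 \eps_2 V$ together with the differential expansion (\ref{eqn:Adiff}), one expands both $\eps_1 X \cdot \eps_2 Y$ and $\eps_2 Y \cdot \eps_1 X$ explicitly; their difference in the vertical bundle, identified via $\nu$ with a tangent vector at $x$, equals $dX(x)Y(x) - dY(x)X(x)$.

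The main obstacle is precisely this final chart calculation: one must carefully keep the four pieces of $V^{TT\K}$ separate while composing the infinitesimal automorphisms $\mu \circ X^{TT\K} \circ Y^{T\K}$, and then isolate the $\eps_1\eps_2$-component of the group commutator. This is the functorial version of the derivation carried out in \cite{Be08}, Theorem 14.4, and a careful execution doubles as an independent verification of the Jacobi identity.
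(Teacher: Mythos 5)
Your overall architecture matches the paper's: assemble Theorems \ref{th:Aut1}, \ref{th:Aut2} and \ref{th:Liebracket}, with the only substantive work being (a) the verification that $\ul G$ is a Weil variety and (b) the identification of the abstract bracket with the vector-field bracket. On (b) you and the paper are on equal footing: the paper simply asserts that ``the construction of the Lie bracket from the preceding chapter translates,'' deferring the chart computation to \cite{Be08}, Theorem 14.4, exactly as you propose to carry it out. The genuine divergence is in (a), and it is also the weakest point of your write-up. You claim that an $(\bA \times_\K \bB)$-automorphism of $M^{\bA \times_\K \bB} = M^\bA \times_M M^\bB$ ``is uniquely determined by its two components agreeing on $M$,'' but that splitting is precisely what has to be proved: a priori an abstract automorphism of the fiber product need not respect the product decomposition, and an argument is required (e.g.\ naturality of $\ul F$ with respect to the two projections $\bA \times_\K \bB \to \bA$ and $\bA \times_\K \bB \to \bB$, which are morphisms of $(\bA\times_\K\bB)$-algebras, identifies the component $F^{\bA\times_\K\bB}$ with $F^\bA \times_M F^\bB$ under $p^{\bA,\bB}$; one must then still check bijectivity of $G^{\bA\times_\K\bB} \to G^\bA \times_{G^\K} G^\bB$ as a map of natural families). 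The paper takes a different and cleaner route here: since $G^\bA$ is a group and the projection to $\Aut_\K(\ul M)$ splits (Theorem \ref{th:Aut1}), homogeneity reduces the Weil-variety axioms for $\ul G$ to the single fiber over $\ul{\id_M}$, which Theorem \ref{th:Aut2}(1) identifies with the set of $\bA$-vector fields, i.e.\ with $\Mor(\ul M, \ul M^\bA)$ restricted to sections; for sections the splitting over $\ul M^\bA \times_M \ul M^\bB$ is immediate from the universal property of the fiber product, and likewise for the exactness axiom. I would recommend either supplying the naturality argument sketched above or, more economically, adopting the homogeneity reduction. With that step repaired, the rest of your proof (identification of $\ul\g$ with $\ul{\XX}(TM)$, matching of the additive structures via Theorem \ref{th:Aut2}(2), and the application of Theorem \ref{th:Liebracket}) is correct and coincides with the paper's.
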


\begin{proof}
It is straightforward that $\bA \mapsto G^\bA$ defines a group object in the category of Weil spaces, but it is less
obvious that this object is again a Weil variety. Here we use the description of the fibers given by the preceding theorem:
since a group is homogeneous, if suffices to consider the fiber over 
 the neutral element $\ul{\id_M}$; by the preceding theorem, this fiber
  is given by the collection of $\bA$-vector fields over $M$, and using this,
it is quite easy to deduce that the defining properties of a Weil variety hold for $\ul G$.  
Now the Lie algebra of $\ul G$ is the fiber of $\Aut_{T\K} (TM)$ over $\id_M$, that is, the space of usual vector fields,
and the construction of the Lie bracket from the preceding chapter translates to the context of vector fields as
described in the theorem.
\end{proof}

\begin{remark}
Conversely, the Lie algebra of a Lie group can also be described as the space of left (or right) invariant vector fields
on the group, with bracket given by the bracket of vector fields.
\end{remark}

\begin{remark}\label{rk:cc}\label{rk:cclosed}
The argument from the preceding proof can be extended  to show that the category of Weil varieties
has internal hom spaces and hence is cartesian closed.
\end{remark}

\subsection{Automorphisms and derivations}

\begin{theorem}
Let $\ul M$ be a Weil variety with binary  multiplication $\ul s: \ul M\times \ul M \to\ul M$.
For  a vector field $\ul X: \ul M \to \ul{TM}$, the following are equivalent:

\begin{enumerate}
\item
$\ul X$ is a homomorphism of multiplications $\ul s$ and $\ul{Ts}$,
\item
the infinitesimal automorphism $\ul F$ corresponding to ${\ul X}$ is an automorphism of multiplication $\ul{Ts}$,
\item
the vector field $\ul X$ is a {\em derivation}: for each Weil algebra $\bA$ and $p,q \in M^\bA$,
$$
X^\bA ( s^\bA(p,q)) = Ts^\bA (X^\bA(p),0_q) + Ts^\bA(0_p,X^\bA(q)),
$$
or, in notation following \cite{Lo69}, Lemma 4.2, 
$$
X (p \cdot q) = X(p) \cdot q + p \cdot X(q) .
$$
\end{enumerate}

\nin The {\em derivations of $s$} (vector fields satisfying (3)) form a Lie subalgebra 
$\ul{\Der(M,s)}$ of the Lie algebra of
vector fields. 
A similar result holds for general $n$-ary Weil laws on a Weil variety.
\end{theorem}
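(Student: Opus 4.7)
The plan is to exploit the bijection of Theorem~\ref{th:Aut2} between a vector field $\ul X$ and its infinitesimal automorphism $\ul F$ (via $\ul X = \ul F \circ \ul\zeta$ and $\ul F = \ul\mu_M \circ \ul X^{T\K}$), together with the infinitesimal linearity of $\ul{TM}$ from Theorem~\ref{th:WeilLinear}. The equivalence (1)$\Leftrightarrow$(3) is almost immediate: by Theorem~\ref{th:transitivity} one has $T(M \times M) = TM \times TM$, so the fiber of $\ul{TM} \times \ul{TM}$ over $(p,q)$ is the direct sum $T_p^\bA M \oplus T_q^\bA M$, and Theorem~\ref{th:WeilLinear} applied to the Weil law $\ul s$ implies that $Ts^\bA$ is $\K$-linear on this fiber. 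Decomposing $(X^\bA(p), X^\bA(q)) = (X^\bA(p), 0_q) + (0_p, X^\bA(q))$ and applying $Ts^\bA$ then turns (1) into (3), and conversely.

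For (2)$\Rightarrow$(1) I would specialize condition (2), namely $\ul F \circ \ul{Ts} = \ul{Ts} \circ (\ul F \times \ul F)$, to the zero sections, using $\ul{Ts} \circ (\ul\zeta \times \ul\zeta) = \ul\zeta \circ \ul s$ (naturality of the zero section) and $\ul F \circ \ul\zeta = \ul X$; this directly yields $\ul X \circ \ul s = \ul{Ts} \circ (\ul X \times \ul X)$, which is (1). Conversely, applying the $T\K$-tangent functor to (1) gives $\ul{TX} \circ \ul{Ts} = \ul{TTs} \circ (\ul{TX} \times \ul{TX})$, and composing on the left with $\ul\mu_M: \ul{TTM} \to \ul{TM}$ (induced by the algebra product $\mu: TT\K \to T\K$ of Lemma~\ref{la:sum}(1)) produces (2) once one invokes the naturality identity $\ul\mu_M \circ \ul{TTs} = \ul{Ts} \circ (\ul\mu_M \times \ul\mu_M)$ and uses $\ul F = \ul\mu_M \circ \ul{TX}$. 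The main obstacle to check carefully is precisely this naturality identity: it expresses that $\ul\mu_M$ intertwines the $TT\K$- and $T\K$-scalar extensions of $\ul s$, and follows from $\ul\mu_M$ being a natural transformation between product-preserving functors.

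For the final assertion, I would introduce the subfunctor $\ul{G_s} \subset \ul{\Aut_\K(M)}$ with $G_s^\bA := \Aut_\bA(\ul M^\bA, \ul s^\bA)$, cut out by the equations $F \circ s^\bA = s^\bA \circ (F \times F)$; by the zero-set construction of Remark~\ref{rk:properties} it is a sub-Weil-Lie-group of $\ul{\Aut_\K(M)}$. By the preceding corollary on the Lie bracket of vector fields, the Lie algebra of the ambient group is the space of vector fields with its usual bracket, while the Lie algebra of $\ul{G_s}$ is the fiber of $\ul{TG_s}$ over $\id_M$, namely the infinitesimal automorphisms preserving $\ul{Ts}$; by the equivalence (2)$\Leftrightarrow$(3) already established these coincide with the derivations of $\ul s$. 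This exhibits $\ul{\Der(M,s)}$ as a Lie subalgebra, and the extension to $n$-ary Weil laws requires only the obvious modification with $M^n$ in place of $M \times M$ and a sum over positions in condition (3).
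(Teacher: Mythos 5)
Your argument is correct, and it is genuinely more explicit than the paper's own proof, which simply asserts that the proof of Lemma 4.2 and Prop.\ 4.3\,a) of Loos, \emph{Symmetric Spaces I}, ``carries over.'' Your route for the three equivalences is purely functorial and fits the paper's framework well: (1)$\Leftrightarrow$(3) is exactly the fibrewise linearity of $Ts^\bA$ on $T_pM\times T_qM$ from Theorem \ref{th:WeilLinear} (note only that the linearity must be invoked at every level $\bA$, which is automatic since the addition law and the linearity of tangent maps are identities of Weil laws); (2)$\Rightarrow$(1) is precomposition with $\ul\zeta\times\ul\zeta$ using $\ul{Ts}\circ(\ul\zeta\times\ul\zeta)=\ul\zeta\circ\ul s$ and $\ul F\circ\ul\zeta=\ul X$; and (1)$\Rightarrow$(2) is postcomposition of $T$ applied to (1) with $\ul\mu_M$, where the key identity $\ul\mu_M\circ\ul{TTs}=\ul{Ts}\circ(\ul\mu_M\times\ul\mu_M)$ is nothing but naturality of the Weil law $\ul s$ with respect to the algebra morphism $\mu:TT\K\to T\K$ of Lemma \ref{la:sum}(1), so it holds for free. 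Loos's original argument, by contrast, is a computation with vector fields acting as derivations on functions, which does not transport literally to this chart-free, dual-free setting; your version is the ``covariant'' translation the paper implicitly has in mind. The one soft spot is the final assertion: realizing $\ul{\Der(M,s)}$ as the Lie algebra of $\ul{\Aut}(\ul M,\ul s)$ presupposes that this automorphism functor is a Weil (Lie) subgroup, a fact the paper itself only states and defers (``will be discussed in more detail elsewhere''); likewise, forming group commutators of the $F_X$ requires invertibility of infinitesimal endomorphisms, guaranteed only for Weil manifolds by Theorem \ref{th:Aut2}(3). A way to close this without those inputs is to check closure under the bracket directly: if $F_X,F_Y$ satisfy (2), then so do their lifts $TF_X,TF_Y$ with respect to $\ul{TTs}$ (apply $T$ to the identity of Weil laws), hence so does any composite, and the commutator formula $[\eps_1X,\eps_2Y]_{grp}=\eps_1\eps_2[X,Y]_{alg}$ of the preceding corollary then exhibits $[X,Y]$ as a derivation; closure under $\K$-linear combinations is immediate from the form of (3).
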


\begin{proof} 
The proof  from \cite{Lo69}, Lemma 4.2 and Prop.\ 4.3 a),  carries  over in a rather 
straightforward way.
\end{proof}

\begin{corollary}\label{cor:LTS}
The derivations of a symmetric space $(\ul M,\ul s)$ form a Lie subalgebra of the Lie algebra of vector fields.
If $2$ is invertible in $\K$, then, for any choice of base point $\ul o$ in $\ul M$, this Lie algebra
inherits a $\Z / 2 \Z$-grading induced from the symmetry with respect to $\ul o$, and the $-1$-eigenspace
is in canonical bijection with the tangent space $T_o M$; in this way, the
{\em Lie triple system} of a symmetric space attached to $\ul o$ can be defined as in
\cite{Be08, Lo69}. It depends functorially on the symmetric space with base point.
\end{corollary}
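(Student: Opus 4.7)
The first assertion is immediate from the preceding theorem applied to the binary Weil law $\ul s$. For the remaining assertions, fix a base point $\ul o$ in $\ul M$, and consider the Weil law $\ul{s_o} : \ul M \to \ul M$ defined by $s_o^\bA(x) := s^\bA(o^\bA, x)$. Axiom (2) of a symmetric space gives $\ul{s_o} \circ \ul{s_o} = \ul\id_M$, and axiom (3) with $x = o$ shows that $\ul{s_o}$ is an automorphism of $(\ul M, \ul s)$. Consequently, conjugation by $\ul{s_o}$ acts on the Lie algebra $\ul{\Der(M,s)}$ as an involution $\sigma_o$.

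Assume now $2 \in \K^\times$. The involution $\sigma_o$ then yields a direct-sum decomposition $\Der(\ul M, \ul s) = \h \oplus \m$ into the $+1$ and $-1$ eigenspaces, which is the required $\Z/2\Z$-grading. Since $\sigma_o$ is a Lie algebra automorphism of order two, one obtains $[\h,\h] \subset \h$, $[\h,\m] \subset \m$ and $[\m,\m] \subset \h$; in particular $\m$ is stable under the ternary bracket $(X,Y,Z) \mapsto [[X,Y],Z]$, so it inherits a natural Lie triple system structure.

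The central step is to show that the evaluation map $\mathrm{ev}_o : \m \to T_o M$, $\ul X \mapsto X^\K(o)$, is a bijection. Well-definedness is automatic: evaluating the anti-equivariance relation $T\ul{s_o} \circ \ul X \circ \ul{s_o} = -\ul X$ at $\ul o$ and using axiom (4), which asserts $Ts_o|_{T_oM} = -\id$, yields the tautology $-X(o) = -X(o)$, so no constraint at $o$ beyond $X(o) \in T_o M$ is imposed. For the bijection, we follow the arguments of \cite{Lo69} (Section 5) and \cite{Be08} (Chapter 5): injectivity is obtained by combining the derivation identity $X^\bA(s^\bA(p,q)) = Ts^\bA(X^\bA(p),0_q) + Ts^\bA(0_p, X^\bA(q))$ specialized at $p=o$ with the anti-equivariance of $X$ to propagate $X(o) = 0$ to $X \equiv 0$; surjectivity is obtained by constructing, for each $\xi \in T_o M$, a ``fundamental vector field'' $X_\xi \in \m$ satisfying $X_\xi(o) = \xi$ by a functorial formula expressed entirely in terms of $\ul s$, $\ul o$ and $\xi$. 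Both constructions are diagrammatic in nature and transport verbatim to the category of Weil varieties.

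Transporting the ternary bracket of $\m$ through $\mathrm{ev}_o$ endows $T_o M$ with a Lie triple system structure, which in the classical setting coincides with the one of \cite{Lo69, Be08}. Functoriality with respect to base-point preserving morphisms of Weil symmetric spaces is then immediate: the involution $\sigma_o$, its eigenspace decomposition, the evaluation $\mathrm{ev}_o$, and the ternary bracket are all defined by natural transformations, so any morphism $(\ul M, \ul s, \ul o) \to (\ul N, \ul t, \ul{o'})$ induces compatible maps at every stage. The main technical obstacle is the explicit construction of the transvections $X_\xi$, where we rely on the fact that Loos' construction is purely diagrammatic and therefore directly applicable in our setting.
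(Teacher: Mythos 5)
Your proof is correct and is essentially the argument the paper intends: the paper gives no proof of this corollary beyond invoking the preceding theorem and the references \cite{Be08, Lo69}, and what you write out is precisely that classical Loos-style argument (conjugation by the point symmetry $\ul{s_o}$ as an involution of $\ul{\Der(M,s)}$, eigenspace splitting using $2\in\K^\times$, injectivity of $\mathrm{ev}_o$ from the derivation identity at $p=o$ combined with anti-invariance, surjectivity via infinitesimal transvections). For the one formula you defer to \cite{Lo69}: one may take the transvection to be $X_\xi(x):=\tfrac12\, Ts\bigl(\xi,0_{s_o(x)}\bigr)$, which lands in $T_xM$ because $s(o,s_o(x))=x$, and satisfies $X_\xi(o)=\xi$ because $Ts(\xi,\xi)=\xi$ and $Ts(0_o,\xi)=-\xi$ force $Ts(\xi,0_o)=2\xi$ by fiberwise linearity of $Ts$.
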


\begin{remark}
The Lie triple system can be interpreted as {\em curvature tensor of a canonical connection},
cf.\ \cite{Be08, Lo69}.
\end{remark}

\begin{remark}
If $\ul M$ is a Weil variety with multiplication $\ul s$, then
then the automorphisms of $\ul s$ form a Weil Lie group $\ul\Aut(\ul M,\ul s)$, 
Lie subgroup of the group $\ul\Aut_\K(\ul M)$,
given by associating to
a Weil algebra $\bA$ the group
$$
\Aut_\bA(M^\bA,s^\bA) := \{ g^ \bA \mid \, \ul g \in \ul{\Aut(M)}, \,
\ul s \circ (\ul g \times \ul g) = \ul g \circ \ul s \} ,
$$
and $\ul \Der(\ul M,\ul s)$ is its Lie algebra. The main point here is to show that $\ul\Aut(\ul M,\ul s)$
is a Weil variety,  cf.\ remark \ref{rk:properties}  on categorical properties  of 
Weil varieties.  In this context, the higher order analog of the derivation property 
is  the {\em expansion property} (see \cite{KMS}, 37.6).
This will be discussed in more detail elsewhere.
\end{remark}

\end{document}